\documentclass[a4paper,12pt]{article}

\usepackage[textwidth=125mm, textheight=195mm]{geometry}
\usepackage[english]{babel}
\usepackage{graphicx}
\usepackage{amsmath}
\usepackage{amsfonts}
\usepackage{amsthm}
\usepackage{epstopdf}
\usepackage{color}

\geometry{verbose,a4paper,tmargin=20mm,bmargin=30mm,lmargin=19mm,rmargin=19mm}

\begin{document}

\newcommand{\wk}{\mbox{$\,<$\hspace{-5pt}\footnotesize )$\,$}}

\numberwithin{equation}{section}
\newtheorem{teo}{Theorem}
\newtheorem{lemma}{Lemma}

\newtheorem{coro}{Corollary}
\newtheorem{prop}{Proposition}
\theoremstyle{definition}
\newtheorem{definition}{Definition}
\theoremstyle{remark}
\newtheorem{remark}{Remark}

\newtheorem{scho}{Scholium}
\newtheorem{open}{Question}
\newtheorem{example}{Example}
\numberwithin{example}{section}
\numberwithin{lemma}{section}
\numberwithin{prop}{section}
\numberwithin{teo}{section}
\numberwithin{definition}{section}
\numberwithin{coro}{section}
\numberwithin{figure}{section}
\numberwithin{remark}{section}
\numberwithin{scho}{section}

\bibliographystyle{abbrv}

\title{Differential geometry of immersed surfaces in three-dimensional normed spaces}
\date{}

\author{Vitor Balestro\footnote{Corresponding author}  \\ CEFET/RJ Campus Nova Friburgo \\ 28635000 Nova Friburgo \\ Brazil \\ vitorbalestro@id.uff.br \and Horst Martini \\ Fakult\"{a}t f\"{u}r Mathematik \\ Technische Universit\"{a}t Chemnitz \\ 09107 Chemnitz\\ Germany \\ martini@mathematik.tu-chemnitz.de \and  Ralph Teixeira \\ Instituto de Matem\'{a}tica e Estat\'{i}stica  \\ Universidade Federal Fluminense \\24210201 Niter\'{o}i\\ Brazil \\ ralph@mat.uff.br}

\maketitle

\begin{abstract}
In this paper we study curvature types of immersed surfaces in three-dimensional (normed or) Minkowski spaces. By endowing the surface with a normal vector field, which is a transversal vector field given by the ambient Birkhoff orthogonality, we get an analogue of the Gauss map. Then we can define concepts of principal, Gaussian, and mean curvatures in terms of the eigenvalues of the differential of this map. Considering planar sections containing the normal field, we also define normal curvatures at each point of the surface, and with respect to each tangent direction. We investigate the relations between these curvature types. Further on we prove that, under an additional hypothesis, a compact, connected surface without boundary whose Minkowski Gaussian curvature is constant must be a Minkowski sphere. 

\end{abstract}

\noindent\textbf{Keywords}: affine normal, Birkhoff orthogonality, Birkhoff-Gauss map, (equi-)affine immersions, Finsler manifold, Minkowski Gaussian curvature, Minkowski mean curvature, Minkowski normal curvature, rigidity theorems

\bigskip

\noindent\textbf{MSC 2010:} 53A35, 53A15, 53A10, 58B20, 52A15, 52A21, 46B20

\section{Introduction}

The geometry of normed spaces, also called \emph{Minkowski geometry} (see \cite{thompson}), is a research field which presents a lot of interesting directions to explore. Recently, we are concerned with giving a still missing systematic study of the differential geometry referring to these spaces. Although this topic was already partially studied throughout the past decades by researchers as Busemann (cf. \cite{Bus3}) and Petty (cf. \cite{Pet}), the existing works on it are, in a certain way, dispersed through the literature. The first step towards a systematic treatment is given in \cite{Ba-Ma-Sho}, where various concepts of curvature for regular curves in normed planes are studied. The present work aims to present an approach to the geometry of surfaces in three-dimensional Minkowski spaces, and this effort can be seen from the perspectives of different areas, namely Minkowski geometry, classical differential geometry, and Finsler geometry. In view of Minkowski geometry, we are extending some concepts from Euclidean geometry and investigate their behavior, asking what properties the general case has, and by which properties the inner product subcase can be characterized. Our construction is a particular equiaffine immersion (as it is defined and investigated in \cite{nomizu}), and we may study whether this particular case has a special behavior. From the viewpoint of Finsler geometry, we are studying curvatures of Finsler manifolds whose geometry is induced by the Minkowski geometry of an ambient space. The geometry in the tangent spaces is therefore defined by the parallel central planar sections of a certain convex body (namely, the unit sphere of the considered norm) that they determine.   \\

Let us concretely explain the initial idea behind our constructions. The approach to affine differential geometry given in the book \cite{nomizu} regards hypersurfaces immersed in an affine space $\mathbb{R}^{n+1}$ endowed with a \emph{transversal vector field} ``playing the role" of a normal vector field. Our idea in this paper is to endow a surface in a three-dimensional Minkowski space with the transversal vector field obtained via the Birkhoff orthogonality associated to the norm. As far as the authors know, this construction was originally studied by Biberstein \cite{biberstein}; but this approach was different to the one that we present here. As we will see, such transversal fields give birth to immersions which are \emph{equiaffine}, and this allows us to extend some concepts from classical differential geometry to normed spaces. As it is often the case when dealing with non-Euclidean geometries and, in particular, general Minkowski spaces, one is mainly confronted with two situations: a large variety of concepts obtained as very natural extensions yields also really different analogues of classical concepts, or it yields Minkowskian analogues which remain very similar to their Euclidean subcases. In the present paper, both variants of extensions will occur. \\

The paper is organized as follows. Sections \ref{basics} and \ref{affine} are devoted to background theory and notation from Minkowski geometry and affine differential geometry, respectively. The reader not familiar with some of these topics will find standard references. In particular, by endowing a regular curve with the normal vector field determined by Birkhoff orthogonality, we re-obtain the concept of \emph{circular curvature}, which is one of the curvature types studied for the planar situation in \cite{Ba-Ma-Sho}. This is an important concept for us, since it is central in the definition of an analogue to the \emph{normal curvature} of a surface. \\

The possibly most important part of this paper, Section \ref{birkgaussmap}, is devoted to the study of  an analogue to the Gauss map of a surface. We define the principal curvatures and principal directions to be the eigenvalues and eigenvectors of the differential of this map, respectively. Also, we extend the notions of curvature lines, asymptotic directions, and umbilic points. In particular it is proven that a surface whose points are all umbilic must be a Minkowski sphere. In Section \ref{normal} an analogue of normal curvature is obtained by considering the plane curvatures of curves obtained as plane sections of the surface. The relations between the normal curvature and the principal curvatures are also described. Up to assuming a further hypothesis on the (Minkowski) curvature lines of the surface, we prove in Section \ref{rigid} that, similarly to the Euclidean subcase, if the Minkowski Gaussian curvature of a surface is constant, then this surface must be a Minkowski sphere. We also prove that the same holds if the mean curvature is constant, subject to the hypothesis that the Minkowski Gaussian curvature is positive.\\

This is the first of three papers devoted to the study of differential geometry of surfaces immersed in three-dimensional spaces endowed with a norm from several viewpoints. Hence this paper aims to build the ``main core" of the theory, and this is mostly done in Sections \ref{birkgaussmap} and \ref{normal}. Also, the reader will notice that the investigations made in this paper are driven by the inspiration from classical differential geometry, since most of our results are analogues to the ones of this area subject. In \cite{diffgeom2} we adopt the viewpoint of affine differential geometry, which is also very natural, since our Gauss map is constructed using a central idea of this field. The third paper \cite{diffgeom3} deals with further topics in the theory, such as minimal surfaces and metric issues. \\

\section{Notation and basic concepts}\label{basics}

A \emph{normed} (=\emph{Minkowski}) \emph{space} is a finite dimensional vector space endowed with a usual norm. (Thus, to avoid confusion, we note  that we are not concerned with the so-called Minkowskian space-time geometry.) For the sake of simplicity, we assume here that the vector space is $\mathbb{R}^{n+1}$, and we will denote the norm by $||\cdot||$. The \emph{unit ball} and the \emph{unit sphere} of a normed space are, respectively, the sets $B = \{v \in \mathbb{R}^{n+1}:||v||\leq 1\}$ and $\partial B=\{v \in \mathbb{R}^{n+1}:||v|| = 1\}$, respectively. Thus, $B$ is a \emph{centered convex body}, i.e., a compact, convex set with non-empty interior centered at the origin. Since we are dealing with aspects of differential geometry, throughout the text we will assume that the norm is \emph{strictly convex} (meaning that the triangle inequality is strict for linearly independent vectors or, equivalently, that the unit sphere does not contain straight line segments) and \emph{smooth}, in the sense that the unit sphere is locally the graph of a $C^{\infty}$ map $f:U\subseteq\mathbb{R}^n\rightarrow\mathbb{R}$ (the $C^{\infty}$ hypothesis can, however, often be relaxed to $C^2$ or to the smallest regularity class such that all involved derivatives make sense). \\

The homothets of the unit ball are called \emph{Minkowski balls}, and their boundaries are the \emph{Minkowski spheres}. Given a non-zero vector $v \in \mathbb{R}^{n+1}$ and a hyperplane $H \subseteq \mathbb{R}^{n+1}$, we say that $v$ is \emph{Birkhoff orthogonal} (or simply \emph{orthogonal}) to $H$ if the unit ball is supported by $H$ at $v/||v||$ (see Figure \ref{birkhoff}). We denote this relation by $v \dashv_B H$. From the hypothesis of smoothness and strict convexity it follows that Birkhoff orthogonality is unique, both at left and at right (as a relation between directions and planes). \\

\begin{figure}[h]
\centering
\includegraphics{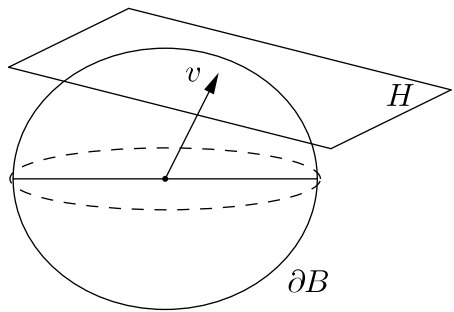}
\caption{$v \dashv_B H$.}
\label{birkhoff}
\end{figure}

Notice that Birkhoff orthogonality can be naturally extended to a relation between vectors: given two non-zero vectors $v, w \in \mathbb{R}^{n+1}$, we say that $v$ is Birkhoff orthogonal to $w$ (denoted by $v \dashv_B w$) if the hyperplane $H$ to which $v$ is Birkhoff orthogonal contains $w$. If the space is two-dimensional, then Birkhoff orthogonality is essentially a (homogeneous) relation between vectors, since in this case the hyperplanes are lines (see \cite{alonso} for more about orthogonality relations in normed spaces). In spaces of dimensions at least 3, Birkhoff orthogonality is a symmetric relation if and only if the norm is Euclidean. In the two-dimensional case, Birkhoff orthogonality is symmetric if and only if the unit circle is a \emph{Radon curve}, yielding the notion of \emph{Radon plane}. Properties and constructions of Radon curves are presented in \cite{martiniantinorms} and \cite{radonbalestro}. Standard references with respect to Minkowski geometry as a whole are \cite{thompson}, \cite{martini2}, and \cite{martini1}. Regarding differential geometry in Minkowski spaces we refer the reader to \cite{keti} and to the papers \cite{Bus3} and \cite{Pet} already mentioned in the introduction. The paper \cite{keti} deals with global theorems for curves in Minkowski spaces (such as the Fenchel and the Fary-Milnor theorem). \\

Now we will introduce a concept of curvature for plane curves that we will use later. For more information on curvature concepts of curves in two-dimensional Minkowski spaces (i.e., in normed planes) we refer to \cite{Ba-Ma-Sho}. An immersed hypersurface in a normed plane is a regular curve $\gamma:[0,c]\rightarrow (\mathbb{R}^2,||\cdot||)$ which, for our purpose, we may assume to be parametrized by arc-length. Up to choice of orientation, the unit transversal vector field to $\gamma$ given by Birkhoff orthogonality is the vector field $\eta:[0,c]\rightarrow \partial B$ such that $\eta(s) \dashv_B \gamma(s)$ and $[\eta(s),\gamma'(s)] >0$ for each $s \in [0,c]$. Since $\eta(s) \in \partial B$, there exists a function $t(s):[0,c]\rightarrow[0,l(\partial B)]$ such that $\eta(s) = \varphi(t(s))$ for each $s \in [0,c]$. Consequently, we have that
\begin{align*} \frac{d\varphi}{dt}(t(s)) = \gamma'(s), \ \ s \in [0,c].
\end{align*}
In \cite{Ba-Ma-Sho} the number $k(s):=t'(s)$ is called the \emph{circular curvature} of $\gamma$ at $p = \gamma(s)$. It is a natural extension of the usual curvature in the Euclidean plane and, when it does not vanish, it can also be regarded as the inverse of the radius of the osculating (Minkowski) circle of $\gamma$ at $p$.

\section{Affine immersions and transversal vector fields} \label{affine}

As explained in the introducion, our motivation to approach differential geometry in normed spaces is inspired mainly by affine differential geometry, and for this theory we refer to the book \cite{nomizu}. In this short section we briefly explain the basic ideas and concepts; for proofs the reader can consult the mentioned book. \\

Assume that $\mathbb{R}^{n+1}$ is the usual $(n+1)$-dimensional Euclidean space endowed with the standard connection $D:C^{\infty}(T\mathbb{R}^{n+1})\times C^{\infty}(T\mathbb{R}^{n+1})\rightarrow C^{\infty}(T\mathbb{R}^{n+1})$, and let $M$ be a $n$-dimensional manifold together with an immersion $f:M\rightarrow\mathbb{R}^{n+1}$. We call $f$ a \emph{hypersurface immersion} of $\mathbb{R}^{n+1}$. In classical differential geometry, one usually uses, for $x \in M$, the decomposition
\begin{align*} T_{f(x)}\mathbb{R}^{n+1} = f_{*}(T_xM) + \mathrm{span}\{\eta_x\},
\end{align*}
where $f_{*}$ denotes the usual push-forward differential map, and $\eta_x$ is a unit normal vector to $f_{*}(T_xM)$. By doing so, one can obtain the Levi-Civita connection and the second fundamental form associated to the metric induced on $M$ by the Euclidean metric in $\mathbb{R}^{n+1}$. \\

However, the point here is that in order to decompose $T_{f(x)}\mathbb{R}^{n+1}$, one does not need to regard the \emph{normal vector} to $f_{*}(T_xM)$, but any \emph{transversal vector} to it. Formally, given a point $x_0 \in M$, we consider a smooth transversal vector field $x \mapsto \xi_x$ defined in a neighborhood $U$ of $x_0$, meaning that $\xi_x \notin f_{*}(T_xM)$. We write then
\begin{align}\label{decomposition} T_{f(x)}\mathbb{R}^{n+1} = f_{*}(T_xM) + \mathrm{span}\{\xi_x\}.
\end{align}
The choice of such a transversal field induces implicitly a connection $\nabla:C^{\infty}(TU)\times C^{\infty}(TU)\rightarrow C^{\infty}(TU)$ by
\begin{align}\label{gauss} D_Xf_{*}(Y) = f_{*}(\nabla_XY) + h(X,Y)\xi.
\end{align}
It is easy to see that $\nabla$ is a torsion-free connection, meaning that $\nabla_XY-\nabla_YX-[X,Y]$ vanishes everywhere, and $h$ is a symmetric bilinear form in $T_xM$, for each $x \in U$. The connection $\nabla$ is called the \emph{induced connection}. Formula (\ref{gauss}) is known as the \emph{formula of Gauss}. The map $h$ is called the \emph{affine fundamental form}, and its rank is called the \emph{rank of the hypersurface}. This number does not depend on the transversal field considered in the hypersurface (this is proved in \cite{nomizu}). \\

The covariant derivative of $\xi$ can be decomposed in view of (\ref{decomposition}) as well. By doing so, we obtain the \emph{Weingarten formula}
\begin{align}\label{weingarten} D_X\xi = -f_{*}(SX) + \tau(X)\xi,
\end{align}
where $S:T_xM\rightarrow T_xM$ is a linear map called the \emph{shape operator}, and $\tau:T_xM\rightarrow\mathbb{R}$ is a $1$-form. As we will show later, this $1$-form will be of no importance for us, since every considered transversal field will be shown to have tangential derivative. Let $\mathrm{det}:\mathbb{R}^{n+1}\rightarrow\mathbb{R}$ denote the usual determinant in $\mathbb{R}^{n+1}$, and define a volume form on $M$ by
\begin{align*} \omega(X_1,...,X_n) = \mathrm{det}[X_1,...,X_n,\xi],
\end{align*}
where we are omitting the push-forward $f_{*}$ from the notation, for the sake of convenience. We call $\theta$ the \emph{induced volume element}. A volume form $\theta$ is said to be \emph{parallel} with respect to a connection $\nabla$ if $\nabla\theta = 0$. For the induced volume element $\omega$, we have the following

\begin{prop} We have $\nabla_X\omega = \tau(X)\omega$ for each $X \in T_xM$. Consequently, $\omega$ is parallel if and only if $D_X\xi$ is always tangential. In this case, we say that $(\nabla,\omega)$ is an \emph{equiaffine structure} on $M$.
\end{prop}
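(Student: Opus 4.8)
The plan is to differentiate the defining relation $\omega(X_1,\dots,X_n) = \det[X_1,\dots,X_n,\xi]$ along a tangent vector field $X$ and expand using the Leibniz rule, comparing the result with the action of $\nabla_X$ on $\omega$. First I would recall that the standard connection $D$ on $\mathbb{R}^{n+1}$ is compatible with the determinant form, i.e. $X(\det[Y_1,\dots,Y_{n+1}]) = \sum_i \det[Y_1,\dots,D_XY_i,\dots,Y_{n+1}]$, since $\det$ is parallel with respect to $D$. Applying this with $Y_i = f_*(X_i)$ for $i \le n$ and $Y_{n+1} = \xi$ gives
\begin{align*}
X\big(\omega(X_1,\dots,X_n)\big) = \sum_{i=1}^n \det[X_1,\dots,D_Xf_*(X_i),\dots,X_n,\xi] + \det[X_1,\dots,X_n,D_X\xi].
\end{align*}

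Next I would substitute the formula of Gauss (\ref{gauss}) into the first group of terms and the Weingarten formula (\ref{weingarten}) into the last term. In each summand $\det[X_1,\dots,D_Xf_*(X_i),\dots,X_n,\xi]$, writing $D_Xf_*(X_i) = f_*(\nabla_XX_i) + h(X,X_i)\xi$, the $h(X,X_i)\xi$ contribution vanishes because the determinant then has $\xi$ appearing in two slots, so only $f_*(\nabla_XX_i)$ survives and the sum becomes $\sum_i \det[X_1,\dots,\nabla_XX_i,\dots,X_n,\xi] = \sum_i \omega(X_1,\dots,\nabla_XX_i,\dots,X_n)$. In the last term, $D_X\xi = -f_*(SX) + \tau(X)\xi$; the $-f_*(SX)$ part is tangential and produces a determinant with $n+1$ tangential columns, hence zero, leaving $\det[X_1,\dots,X_n,\tau(X)\xi] = \tau(X)\,\omega(X_1,\dots,X_n)$. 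Collecting, $X(\omega(X_1,\dots,X_n)) = \sum_i \omega(X_1,\dots,\nabla_XX_i,\dots,X_n) + \tau(X)\omega(X_1,\dots,X_n)$, which upon rearranging is exactly $(\nabla_X\omega)(X_1,\dots,X_n) = \tau(X)\,\omega(X_1,\dots,X_n)$ by the definition of the covariant derivative of a tensor.

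The consequences are then immediate. Since $\omega$ is a volume form, it is nowhere zero, so $\nabla_X\omega = 0$ for all $X$ holds if and only if $\tau(X) = 0$ for all $X$, and by the Weingarten formula (\ref{weingarten}) the latter is equivalent to $D_X\xi = -f_*(SX)$ being tangential for every $X$. This justifies the terminology: when $D\xi$ is tangential the pair $(\nabla,\omega)$ consists of a torsion-free connection together with a parallel volume form, i.e. an equiaffine structure.

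I do not expect any genuine obstacle here; the proof is a direct computation. The only point requiring a little care is the bookkeeping that makes the ``extra'' $\xi$-components drop out of the determinants — both the $h(X,X_i)\xi$ terms from the Gauss formula and the $-f_*(SX)$ term from the Weingarten formula vanish for the structural reason that a determinant is alternating and has only $n+1$ slots, one of which is already occupied (by $\xi$ in the first case, and which becomes over-filled with tangential vectors in the second). Making sure the sign conventions and the placement of the pushed-forward vectors are consistent with (\ref{gauss}) and (\ref{weingarten}) is the only thing to watch.
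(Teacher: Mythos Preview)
Your proof is correct and is exactly the standard computation one finds in \cite{nomizu}. The paper itself does not reproduce the argument at all; it simply cites \cite[Proposition~1.4]{nomizu}, so your write-up in fact supplies more detail than the paper does.
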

\begin{proof} See \cite[Proposition 1.4]{nomizu}.

\end{proof}

If $\tau = 0$, we say that $f$ is an \emph{equiaffine immersion}. The affine fundamental form $h$ also induces a volume form $\omega_h$ on $M$, where $\omega_h(X_1,...,X_n)$ is the square root of the absolute value of the determinant of the matrix $[h_{ij}]$, whose entries are the numbers $h(X_i,X_j)$, for $i,j = 1,...,n$. An equiaffine immersion for which $\omega = \omega_h$ is said to be a \emph{Blaschke immersion}, and in this case the transversal field $\xi$ is called the \emph{affine normal field}.

\section{The Birkhoff-Gauss map} \label{birkgaussmap}

Let $(\mathbb{R}^3,||\cdot||)$ be a normed space whose norm is smooth and strictly convex, and let $f:M\rightarrow(\mathbb{R}^3,||\cdot||)$ be a surface immersion. In what follows, we naturally identify the manifold $M$ with its image $f(M)$ and, consequently, the tangent space $T_pM$ with $f_*(T_pM) \subseteq T_{f(p)}\mathbb{R}^{3}$; therefore we will omit the push-forward map $f_*$ from our notation.  For each $p \in M$, let $\eta(p) \in \partial B$ be a unit vector such that $\eta(p) \dashv_B T_pM$. The choice of such a transversal vector gives a local smooth unit vector field that, in our context, clearly plays the role of the usual normal vector field. We will call this vector field the \emph{Birkhoff normal vector field} of $M$. Of course, this local field can be regarded as global if and only if $M$ is orientable. As in the Euclidean subcase, the Birkhoff normal vector field can be seen as a map $\eta:U\subseteq M\rightarrow \partial B$, which we will call the \emph{Birkhoff-Gauss map}.

\begin{lemma} A surface immersion $f:M\rightarrow(\mathbb{R}^3,||\cdot||)$ endowed with the Birkhoff normal vector field is an equiaffine immersion.
\end{lemma}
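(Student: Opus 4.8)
The plan is to show that the Birkhoff normal field $\eta$ has tangential covariant derivative, i.e. that $\tau = 0$ in the Weingarten formula (\ref{weingarten}); by the Proposition above this is exactly equivalent to $f$ being an equiaffine immersion. So the entire task reduces to proving that $D_X\eta \in T_pM$ for every $p \in M$ and every $X \in T_pM$.

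First I would exploit the defining property of $\eta$: since $\eta(p) \dashv_B T_pM$, the tangent plane $T_pM$ is (a translate of) the supporting hyperplane of the unit ball $B$ at the point $\eta(p) \in \partial B$. In other words, if we let $N:\partial B \to \mathbb{R}^3$ be the Euclidean outer unit normal (Gauss map) of the convex hypersurface $\partial B$, then $\eta(p) \dashv_B T_pM$ says precisely that $N(\eta(p))$ is Euclidean-orthogonal to $T_pM$. The key structural fact I want is: the image $\eta(M) \subseteq \partial B$ has tangent plane at $\eta(p)$ equal to $T_{\eta(p)}(\partial B)$, and this plane is parallel to $T_pM$. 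Indeed, $T_{\eta(p)}(\partial B)$ is the set of vectors Euclidean-orthogonal to $N(\eta(p))$, which is the same as $T_pM$ since the two planes have the same Euclidean normal. Now $D_X\eta$ is, by definition of the derivative of a map into $\partial B \subseteq \mathbb{R}^3$, a tangent vector to $\partial B$ at $\eta(p)$, hence it lies in $T_{\eta(p)}(\partial B) = T_pM$. That gives $D_X\eta \in T_pM$, so $\tau(X) = 0$, and we are done.

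Let me restate the core of the argument so the one subtle point is visible. For a curve $t\mapsto p(t)$ in $M$ with $p(0)=p$ and $p'(0)=X$, the chain rule gives $D_X\eta = \frac{d}{dt}\big|_{t=0}\,\eta(p(t))$; since each $\eta(p(t))$ lies on $\partial B$, this velocity vector is tangent to $\partial B$ at $\eta(p)$, i.e. $D_X\eta \in T_{\eta(p)}(\partial B)$. The content is the identification $T_{\eta(p)}(\partial B) = T_pM$ (as linear subspaces of $\mathbb{R}^3$, after the usual identification of tangent spaces to $\mathbb{R}^3$ with $\mathbb{R}^3$ itself): this is immediate from the fact that both are the Euclidean-orthogonal complement of the single direction $N(\eta(p))$, using smoothness so that $\partial B$ has a well-defined tangent plane at each point, and strict convexity so that $\eta$ (equivalently, left Birkhoff orthogonality) is well-defined and smooth. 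Thus $D_X\eta$ is tangential for all $X$, hence $\tau \equiv 0$, and $f$ is equiaffine.

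The main (and essentially only) obstacle is the clean bookkeeping of the three a priori different planes in play — $T_pM$, the supporting hyperplane of $B$ at $\eta(p)$, and the tangent plane $T_{\eta(p)}(\partial B)$ — and checking they all coincide as subspaces of $\mathbb{R}^3$; once that is set up, the tangentiality of $D_X\eta$ is a one-line consequence, and invoking the Proposition finishes the proof. I would also remark in passing that this shows the Weingarten formula for the Birkhoff-Gauss map simplifies to $D_X\eta = -SX$, which is what makes the later definition of principal curvatures via the eigenvalues of $S = -d\eta$ meaningful.
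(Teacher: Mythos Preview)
Your proof is correct and follows essentially the same route as the paper: show that $D_X\eta$ is tangent to $\partial B$ at $\eta(p)$ by differentiating along a curve, then identify $T_{\eta(p)}\partial B$ with $T_pM$. The only cosmetic difference is that you justify this identification via an auxiliary Euclidean normal $N$ (both planes are the Euclidean orthogonal complement of $N(\eta(p))$), whereas the paper argues directly from Birkhoff orthogonality: since $\eta(p)\dashv_B T_{\eta(p)}\partial B$ and $\eta(p)\dashv_B T_pM$, uniqueness of the plane Birkhoff-orthogonal to $\eta(p)$ forces $T_{\eta(p)}\partial B = T_pM$.
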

\begin{proof} Let $p \in M$, and fix a vector $X \in T_pM$. Let $\gamma:(-\varepsilon,\varepsilon)\rightarrow M$ be any smooth curve such that $\gamma(0) = p$ and $\gamma'(0) = X$. Therefore, we have
\begin{align*} D_X\eta|_p = \frac{d}{dt}\eta\circ\gamma(t)|_{t=0}.
\end{align*}
On the other hand, $\eta\circ\gamma$ is a curve on $\partial B$, and hence the right hand term must be a vector in $T_{\eta(p)}\partial B$. Since $\eta(p) \dashv_B T_{\eta(p)}\partial B$, it follows that we have a natural identification $T_{\eta(p)}\partial B \simeq T_pM$. Then we have indeed $D_X\eta|_p \in T_pM$, and the covariant derivative of the transversal vector field is always tangential. This concludes the proof.

\end{proof}

In other words, the differential map $d\eta_p:T_pM\rightarrow T_{\eta(p)}\partial B$ can be regarded as a (linear) map of $T_pM$ onto itself. Notice also that $D_X\eta|_p = d\eta_p(X)$. In classical differential geometry, the differential of the Gauss map is used to define the curvatures of a surface at a point (see \cite{manfredo}). We will develop a similar theory for our context. We start by proving the key fact that each tangent space $T_pM$ admits a basis of eigenvectors of $d\eta_p$. We separate our approach into cases depending on the structure of the affine fundamental form. First, we assume that $h$ has rank $2$ and is definite. As we will see, for all the other cases we need an auxiliary Euclidean structure.

\begin{prop}\label{rank2} If the rank of the affine fundamental form $h$ equals $2$ at $p \in M$, and $h$ is definite, then the tangent space $T_pM$ admits a basis of eigenvectors of the differential map $d\eta_p:T_pM\rightarrow T_pM$.
\end{prop}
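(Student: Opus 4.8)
The plan is to recognize $d\eta_p$ as the negative of the affine shape operator $S$ of the immersion, to prove that $S$ is self-adjoint with respect to $h$, and then to invoke the spectral theorem, which is applicable precisely because $h$ is definite and of rank $2$.

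First I would extract the consequence of the previous lemma: since the immersion is equiaffine, the $1$-form $\tau$ in the Weingarten formula (\ref{weingarten}) vanishes identically. Hence $D_X\eta|_p = -SX$ (we keep omitting the push-forward $f_*$), and by the remark following the first lemma this equals $d\eta_p(X)$; thus $d\eta_p = -S$, and it suffices to exhibit a basis of $T_pM$ consisting of eigenvectors of $S$.

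Next I would show that $S$ is $h$-self-adjoint, i.e.\ that $h(SX,Y)=h(X,SY)$ for all $X,Y\in T_pM$. This is the affine counterpart of the Ricci equation and follows from the flatness of the ambient connection $D$: expanding $D_XD_Y\eta-D_YD_X\eta-D_{[X,Y]}\eta=0$, using $D_Y\eta=-SY$ together with the formula of Gauss (\ref{gauss}) in the form $D_X(SY)=\nabla_X(SY)+h(X,SY)\eta$, and then collecting the components along $\eta$, one is left exactly with $h(Y,SX)=h(X,SY)$, whence the claim by symmetry of $h$ (the tangential components give the Codazzi equation for $S$, which we do not need here). Alternatively, one may simply quote this identity from \cite{nomizu}.

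Finally, since at $p$ the form $h$ has rank $2$ and is definite, either $h$ or $-h$ is a positive definite inner product on the two-dimensional space $T_pM$; as $S$ is self-adjoint with respect to it, the spectral theorem furnishes an orthonormal basis of $T_pM$ consisting of eigenvectors of $S=-d\eta_p$, which proves the proposition. I do not expect a genuine obstacle here. The one point deserving care is that the identity $h(SX,Y)=h(X,SY)$ uses nothing beyond $\tau=0$ and the flatness of $D$ --- exactly what the equiaffine property from the preceding lemma supplies --- and that ``definite'' is precisely the hypothesis that upgrades $h$ to an inner product so that the spectral theorem applies; when $h$ is degenerate, or of rank $2$ but indefinite, the argument breaks down, and those cases must be treated separately with an auxiliary Euclidean structure, as announced in the text.
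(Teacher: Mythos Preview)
Your proposal is correct and follows essentially the same route as the paper: both use flatness of $D$ together with $\tau=0$ to show that $d\eta_p$ is self-adjoint with respect to $h$, and then conclude diagonalizability because $h$ is definite. The only difference is presentation --- the paper carries out the computation explicitly in an $h$-orthonormal basis $\{X,Y\}$ (with $h(X,X)=h(Y,Y)$, $h(X,Y)=0$) and reads off that the matrix of $d\eta_p$ is symmetric, whereas you phrase the same content as ``Ricci equation $+$ spectral theorem''; the paper itself notes this equivalence a few lines after formula~(\ref{exph}).
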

\begin{proof} Recall that the affine fundamental form $h$ is defined intrinsically by the Gauss formula
\begin{align*} D_XY = \nabla_XY + h(X,Y)\eta,
\end{align*}
where $X,Y \in T_pM$ and $\nabla$ is the induced connection. If $h$ has rank $2$, then we may choose vectors $X,Y \in T_pM$ such that $h(X,X) = h(Y,Y)  \neq 0$ and $h(X,Y) = 0$. Taking smooth extensions of $X$ and $Y$ to local vector fields in a neighborhood $U$ of $p$, we may write
\begin{align*} D_X\eta = f_1X + f_2Y \ \ \mathrm{and} \\ D_Y\eta = g_1X + g_2Y,
\end{align*}
for some smooth functions $f_1,f_2,g_1,g_2:U\rightarrow \mathbb{R}$. Differentiating the equalities above yields
\begin{align*} D_YD_X\eta = Y(f_1)X + f_1D_YX + Y(f_2)Y + f_2D_YY \ \ \mathrm{and} \\
D_XD_Y\eta = X(g_1)X + g_1D_XX + X(g_2)Y + g_2D_XY.
\end{align*}
Now notice that since $h(X,Y) = 0$, it follows that, at $p$, $D_XY$ and $D_YX$ are tangential. Therefore, the transversal components of the two vectors above are given by $f_2h(Y,Y)\eta$ and $g_1h(X,X)\eta$, respectively. On the other hand, $D$ is a flat connection, and hence
\begin{align*} D_XD_Y\eta - D_YD_X\eta - D_{[X,Y]}\eta = 0.
\end{align*}
Since $D_{[X,Y]}\eta$ is tangential, it follows that $f_2h(Y,Y)\eta = g_1h(X,X)\eta$, and then $f_2 = g_1$ at $p$. This shows that the matrix of $d\eta_p$ written in the basis $\{X,Y\}$ is symmetric, and we have what we wished to prove.

\end{proof}

The rank of the affine fundamental form does not depend on the transversal vector field, as it becomes clear in \cite[Proposition 2.5]{nomizu}. This opens the possibility of working with an auxiliary Euclidean structure in $\mathbb{R}^3$. Let $\langle \cdot,\cdot \rangle$ denote the usual inner product, and denote by $||\cdot||_e$ the induced Euclidean metric. Let $B_e$ and $\partial B_e$ be the Euclidean unit ball and sphere, respectively. Denote by $u:\partial B_e \rightarrow \partial B$ a (smooth) map which carries each vector $v \in \partial B_e$ to a respective vector $u(v) \in \partial B$ at which the supporting hyperplane of $\partial B$ at $u(v)$ is the same supporting hyperplane of $\partial B_e$ at $v$. Notice that there are exactly two choices of such a smooth map $u$, and that this map can be regarded as the Birkhoff-Gauss map of the Euclidean circle in the geometry given by $||\cdot||$, or as the inverse of the Euclidean Gauss map of $\partial B$ as an immersed surface.\\

Writing $d\xi_pX = du^{-1}_{\eta(p)}\circ d\eta_pX$ and recalling that $du^{-1}_{\eta(p)}$ is self-adjoint (because it is the usual Gauss map of the Minkowski unit sphere), we have the following useful expressions for $h$:
\begin{align} \label{exph} h(X,Y) =-\frac{\langle Y,d\xi_pX\rangle}{\langle\eta,\xi \rangle}= -\frac{\langle du^{-1}_{\eta(p)}Y,d\eta_pX\rangle}{\langle \eta,\xi\rangle}.
\end{align}
  These equalities come from evaluating the usual inner product of both sides of the Gauss formula (\ref{gauss}) with $\xi$. Notice that it follows from the self-adjointness of $du^{-1}_{\eta(p)}$ that $d\eta_p$ is self-adjoint with respect to $h$. In other words, we have $h(X,d\eta_pY) = h(d\eta_pX,Y)$ for any $X,Y \in T_pM$. In the ``language'' of affine differential geometry this is merely the Ricci equation for an equiaffine immersion (cf. \cite[Theorem 2.4]{nomizu}). \\

Notice also that $d\eta_p = du_{\xi(p)}\circ d\xi_p$. From the strict convexity of $B$ we have that the usual Gaussian curvature of $\partial B$ is non-negative, but may be zero at some isolated point. This may create some ``artificial" direction in the kernel of $d\eta_p$, in the sense that it is not in the kernel of $d\xi_p$. In other words, the Birkhoff-Gauss map can have a rank different to that of the Euclidean Gauss map. For that reason, we will restrict ourselves to the case where $du_v$ is an isomorphism for each $v \in \partial B_e$, which is exactly the same as saying that $\partial B$ has positive Euclidean Gaussian curvature at every point. Norms that give birth to unit spheres with such a property will be called \emph{admissible norms}. In what follows, \textbf{all norms are assumed to be admissible}.

\begin{prop}\label{rank2ind} If the rank of the affine fundamental form $h$ at $p \in M$ equals $2$, and $h$ is indefinite, then we also have that the tangent space $T_pM$ admits a basis of eigenvectors of the differential map $d\eta_p$.
\end{prop}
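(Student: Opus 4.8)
The plan is to build a genuine (positive definite) inner product on $T_pM$ with respect to which $d\eta_p$ is self-adjoint, and then to invoke the spectral theorem. The reason one cannot simply repeat the argument of Proposition \ref{rank2} is that the Ricci equation only gives self-adjointness of $d\eta_p$ \emph{with respect to $h$}, and a self-adjoint operator on a Lorentzian plane need not be diagonalizable; when $h$ is indefinite one has to replace it by a form of signature $(+,+)$. The natural candidate is the one furnished by the auxiliary Euclidean structure, and this is where admissibility of the norm is used.

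Concretely, write $A:=du^{-1}_{\eta(p)}$ and recall that, since the norm is admissible, $A$ is a positive definite self-adjoint operator of $T_pM$: it is the differential of the Euclidean Gauss map of the strictly convex surface $\partial B$, and positive Euclidean Gaussian curvature means that its eigenvalues, the Euclidean principal curvatures of $\partial B$, are all strictly positive. Hence
\[ g(X,Y):=\langle AX,Y\rangle,\qquad X,Y\in T_pM, \]
defines an inner product on $T_pM$.

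Next I would check that $d\eta_p$ is self-adjoint with respect to $g$. Using the expression \eqref{exph} together with the self-adjointness of $A$, one gets
\[ h(X,Y)=-\frac{\langle AY,d\eta_pX\rangle}{\langle\eta,\xi\rangle}=-\frac{\langle A\,d\eta_pX,Y\rangle}{\langle\eta,\xi\rangle} \]
for all $X,Y\in T_pM$; since $h$ is symmetric, this forces $\langle A\,d\eta_pX,Y\rangle=\langle A\,d\eta_pY,X\rangle$, i.e. $A\circ d\eta_p$ is self-adjoint for $\langle\cdot,\cdot\rangle$. Therefore
\[ g(d\eta_pX,Y)=\langle A\,d\eta_pX,Y\rangle=\langle A\,d\eta_pY,X\rangle=g(d\eta_pY,X)=g(X,d\eta_pY), \]
so $d\eta_p$ is $g$-self-adjoint. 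The spectral theorem then yields a $g$-orthonormal basis of $T_pM$ consisting of eigenvectors of $d\eta_p$, as claimed. (Equivalently, $d\eta_p=A^{-1}(A\,d\eta_p)$ is the product of a positive definite operator and a self-adjoint one, hence diagonalizable over $\mathbb{R}$.)

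Note that the only hypothesis actually entering is the positive definiteness of $A$, which is exactly admissibility; neither the rank of $h$ nor its indefiniteness plays any role, so the same computation in fact settles all remaining cases at once. The one point requiring care — and the step I expect to be the real (if modest) obstacle — is the bookkeeping behind the identifications $T_{\eta(p)}\partial B_e\simeq T_{\eta(p)}\partial B\simeq T_pM$: one must make sure that ``$A$ is self-adjoint and positive definite'' and ``$h$ is given by \eqref{exph}'' refer to one and the same pairing $\langle\cdot,\cdot\rangle$ on $T_pM$. Once these identifications are fixed consistently, the argument above goes through verbatim.
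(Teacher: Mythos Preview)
Your argument is correct, and in fact cleaner than the paper's own proof of this proposition. Once $A=du^{-1}_{\eta(p)}$ is known to be self-adjoint and positive definite (which is exactly the content of admissibility, as you say), the symmetry of $h$ in \eqref{exph} forces $A\circ d\eta_p$ to be $\langle\cdot,\cdot\rangle$-self-adjoint, so $d\eta_p$ is self-adjoint for the inner product $g(X,Y)=\langle AX,Y\rangle$ and the spectral theorem applies. Your caveat about the identifications is prudent but not a genuine obstacle: the paper uses these same identifications throughout, and they are fixed once and for all by pulling back along parallel translation in $\mathbb{R}^3$.

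The paper, by contrast, argues case by case and in the indefinite situation uses a determinant computation rather than a spectral one: it writes $d\eta_p=du_{\xi(p)}\circ d\xi_p$, shows $\det(du_{\xi(p)})>0$ by admissibility, and uses the indefiniteness of $h$ to exhibit an $h$-orthogonal pair $X,Y$ on which the Euclidean second fundamental form $\langle\,\cdot\,,d\xi_p\,\cdot\,\rangle$ takes opposite signs, whence $\det(d\xi_p)<0$ and so $\det(d\eta_p)<0$; a $2\times 2$ real operator with negative determinant has two distinct real eigenvalues. Your route is more conceptual and, as you observe, works uniformly for all signatures and ranks of $h$, subsuming Propositions~\ref{rank2}, \ref{rank2ind} and \ref{rank01} in a single stroke; the paper's route has the virtue of extracting the extra information that in the indefinite case the two principal curvatures are distinct and of opposite sign (and, in Proposition~\ref{rank01}, that at least one of them is zero), information your argument does not directly yield.
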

\begin{proof} The strategy is to use the map $u:\partial B_e\rightarrow \partial B$ of the unit sphere. If $\xi$ denotes the Euclidean Gauss map of $M$, then we have $\eta = u\circ\xi$. Therefore, $d\eta_p = du_{\xi(p)}\circ d\xi_p$. It follows that
\begin{align*} \mathrm{det}(d\eta_p) = \mathrm{det}\left(du_{\xi(p)}\right)\cdot\mathrm{det}(d\xi_p).
\end{align*}
Notice that $\mathrm{det}\left(du_{\xi(p)}\right) > 0$. Indeed, $u$ is the inverse of the usual Gauss map of the strictly convex surface $\partial B$. On the other hand, if $h$ has rank $2$ and is not definite, then we may choose vectors $X,Y \in T_pM$ such that $h(X,X) = -h(Y,Y) \neq 0$ and $h(X,Y) = 0$. From (\ref{exph}) it follows that $\langle X,d\xi_pX\rangle$ and $\langle Y,d\xi_pY\rangle$ have different signs, and $\langle X,d\xi_pY\rangle = \langle d\xi_pX,Y\rangle = 0$. Therefore, we have that $\mathrm{det}(d\xi_p) < 0$. Hence $\mathrm{det}(d\eta_p) < 0$, and this finishes the proof. 

\end{proof}

The cases where the rank of $h$ is $0$ or $1$ are discussed next. These cases are somehow special because, as in the Euclidean subcase (where $h$ is the usual second fundamental form), when they occur we can guarantee the existence of a null eigenvalue for $d\eta_p$.

\begin{prop}\label{rank01} Let $f:M\rightarrow (\mathbb{R}^3,||\cdot||)$ be an immersed surface, with Birkhoff normal vector field $\eta$ and induced affine fundamental form $h$. If at $p \in M$ the rank of $h$ is $0$, then $d\eta_p = 0$. In the case that the rank of $h$ at $p\in M$ equals $1$, we have that $d\eta_p$ has two distinct eigenvectors, and one of them is associated to the eigenvalue $0$.
\end{prop}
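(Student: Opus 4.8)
The plan is to transfer the whole computation to the auxiliary Euclidean structure that has just been set up. Write $\xi$ for the Euclidean Gauss map of $M$, so that $\eta = u\circ\xi$ and hence $d\eta_p = du_{\xi(p)}\circ d\xi_p$, where $d\xi_p:T_pM\to T_pM$ is (minus) the Euclidean Weingarten operator of $M$ at $p$ — self-adjoint for the auxiliary inner product — and $du_{\xi(p)}$ is self-adjoint with $\det\!\left(du_{\xi(p)}\right)>0$, as recorded in the proof of Proposition \ref{rank2ind}; in particular $du_{\xi(p)}$ is a \emph{definite} operator, and since the norm is admissible it is an isomorphism. The structural fact I would extract from formula (\ref{exph}) is that, because $\langle\eta,\xi\rangle\neq 0$, the rank of $h$ equals the rank of $d\xi_p$, the radical of $h$ coincides with $\ker d\xi_p$, and, $du_{\xi(p)}$ being an isomorphism, $\ker d\eta_p = \ker d\xi_p$.

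The rank-$0$ case is then immediate: rank $h = 0$ forces $d\xi_p = 0$, hence $d\eta_p = du_{\xi(p)}\circ d\xi_p = 0$.

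For the rank-$1$ case I would first choose a Euclidean-orthonormal eigenbasis $\{e_1,e_2\}$ of the self-adjoint operator $d\xi_p$ with $d\xi_pe_1 = 0$ and $d\xi_pe_2 = \kappa e_2$, where $\kappa\neq 0$ (otherwise $d\xi_p$ would vanish, contradicting rank $1$). From $d\eta_pe_1 = du_{\xi(p)}(d\xi_pe_1)=0$ it follows that $e_1$ is an eigenvector of $d\eta_p$ with eigenvalue $0$, and, since $d\eta_p\neq 0$ (else $d\xi_p = 0$), the kernel of $d\eta_p$ is exactly the line $\mathbb{R}e_1$, so $d\eta_p$ has rank $1$. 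The remaining point is to produce a second, independent eigenvector, i.e.\ to rule out that this rank-$1$ operator is nilpotent. I would do this by computing the trace in the basis $\{e_1,e_2\}$: one gets $\mathrm{tr}(d\eta_p) = \langle d\eta_pe_1,e_1\rangle + \langle d\eta_pe_2,e_2\rangle = \kappa\,\langle du_{\xi(p)}e_2,e_2\rangle$, which is nonzero precisely because $\kappa\neq 0$ and $du_{\xi(p)}$ is definite. Hence the characteristic polynomial of $d\eta_p$ is $\lambda(\lambda-\mathrm{tr}\,d\eta_p)$, with the two distinct roots $0$ and $\mathrm{tr}\,d\eta_p\neq 0$, so $d\eta_p$ admits two linearly independent eigenvectors, one of them being $e_1$ with eigenvalue $0$.

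The only genuine obstacle is that last step — exhibiting the second eigenvector when $h$ has rank $1$; everything else is bookkeeping with (\ref{exph}). The decisive input is the definiteness of $du_{\xi(p)}$, which is exactly where the admissibility hypothesis on the norm is used: it guarantees $\langle du_{\xi(p)}e_2,e_2\rangle\neq 0$, hence $\mathrm{tr}(d\eta_p)\neq 0$, and so $d\eta_p$ cannot be nilpotent.
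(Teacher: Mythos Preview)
Your argument is correct and follows the same route as the paper: both transfer the problem to the Euclidean Gauss map via $d\eta_p = du_{\xi(p)}\circ d\xi_p$, use (\ref{exph}) to identify the rank of $h$ with that of $d\xi_p$, and read off the eigenvector with eigenvalue $0$ from the kernel of $d\xi_p$. The only difference is that where the paper invokes ``standard linear algebra arguments'' for the second eigenvector in the rank-$1$ case, you actually spell one out---computing $\mathrm{tr}(d\eta_p)=\kappa\langle du_{\xi(p)}e_2,e_2\rangle\neq 0$ via the definiteness of $du_{\xi(p)}$---which makes explicit that mere invertibility of $du_{\xi(p)}$ would not suffice to exclude nilpotency; definiteness (equivalently, self-adjointness together with positive determinant) is what is really being used.
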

\begin{proof} On the surface $M$, let $\eta$ denote the Birkhoff-Gauss map and $\xi$ denote the usual Euclidean Gauss map. Then we have $\eta = u \circ \xi$. Let $X,Y \in T_pM$, and denote
 by these letters also respective extensions of these vectors to local vectors fields. The Gauss equation for the Euclidean normal map reads
\begin{align*} D_XY = \overline{\nabla}_XY + \overline{h}(X,Y)\xi,
\end{align*}
and hence we may write
\begin{align*} \overline{h}(X,Y) = \langle D_XY,\xi\rangle = -\langle Y,D_X\xi\rangle.
\end{align*}
If, at a point $p \in M$, the bilinear form $h$ has rank $0$, then so has $\overline{h}$. Therefore, in this case we have $\langle Y,d\xi_p(X)\rangle = 0$ for any $X,Y \in T_pM$. It follows that $d\xi_p = 0$. Since $\eta = u\circ\xi$, we have $d\eta_p = du_{\xi(p)}\circ d\xi_p$, and then $d\eta_p = 0$. \\

If the rank of $h$ equals $1$, then let $X$ be the null direction of $\overline{h}$. We have that $\langle Z,D_X\xi\rangle = 0$ for any $Z \in T_pM$, and hence $d\xi_p(X) = 0$. It follows immediately that $d\eta_p(X) = 0$. Since $d\eta_p$ is not null and $du_{\xi(p)}$ is invertible (recall that the norm is admissible), the existence of another eigenvector comes from standard linear algebra arguments.

\end{proof}

\begin{remark} The auxiliary Euclidean structure could have been used to prove also Proposition \ref{rank2}. However, the different method used there emphasizes that in this case the admissibility hypothesis for the norm is not necessary.
\end{remark}

\begin{definition} Let $f:M\rightarrow (\mathbb{R}^3,||\cdot||)$ be a surface immersion with Birkhoff normal vector field $\eta$. Let $\lambda_1,\lambda_2 \in \mathbb{R}$ be the eigenvectors of $d\eta_p$. Then these numbers are called the \emph{principal curvatures} of $M$ at $p$, and the respective eigenvectors $E_1,E_2 \in T_pM$ are called the \emph{principal directions} of $M$ at $p$. The numbers
\begin{align*} K = \lambda_1\lambda_2 \ \ \ \mathrm{and} \ \ \ H = \frac{\lambda_1+\lambda_2}{2}
\end{align*}
are called the \emph{Minkowski Gaussian curvature} and the \emph{Minkowski mean curvature} of $M$ at the point $p$, respectively.
\end{definition}

These definitions are immediate extensions of the Euclidean versions, and we will discuss some of their properties in order to understand them in a better way. First of all, it is easy to see that if we set $\lambda_1 \geq \lambda_2$, then the maps $\lambda_1,\lambda_2:M\rightarrow\mathbb{R}$ are smooth in all points, possibly except for the umbilic ones (since in these points $\lambda_1$ and $\lambda_2$ may ``change their roles''). Regarding the principal directions, it is known that in the Euclidean subcase they must be orthogonal. This is not the case in our context, but we can provide some information about them in terms of the affine fundamental form.

\begin{lemma}\label{princconj} Let $p \in M$ be a point whose principal curvatures are different, and let $V_1,V_2 \in T_pM$ be the principal directions. Then we have $h(V_1,V_2) = 0$.
\end{lemma}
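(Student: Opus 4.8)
The plan is to mimic the classical Euclidean argument showing that eigenvectors of a self-adjoint operator associated to distinct eigenvalues are orthogonal, replacing the Euclidean inner product by the affine fundamental form $h$. The single ingredient I would invoke is the self-adjointness of $d\eta_p$ with respect to $h$, namely $h(X,d\eta_pY) = h(d\eta_pX,Y)$ for all $X,Y \in T_pM$, which was established right after formula (\ref{exph}) as a consequence of the self-adjointness of $du^{-1}_{\eta(p)}$ (equivalently, as the Ricci equation for an equiaffine immersion).

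First I would record that, by hypothesis, $d\eta_pV_1 = \lambda_1 V_1$ and $d\eta_pV_2 = \lambda_2 V_2$ with $\lambda_1 \neq \lambda_2$. Then I would compute $h(V_1,d\eta_pV_2)$ in two ways: on the one hand it equals $\lambda_2\,h(V_1,V_2)$; on the other hand, by self-adjointness with respect to $h$, it equals $h(d\eta_pV_1,V_2) = \lambda_1\,h(V_1,V_2)$. Subtracting the two expressions gives $(\lambda_2 - \lambda_1)\,h(V_1,V_2) = 0$, and since $\lambda_1 \neq \lambda_2$ we conclude $h(V_1,V_2) = 0$.

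There is essentially no obstacle here; the only point requiring a moment of care is to make sure $d\eta_p$ really is $h$-self-adjoint in all the relevant situations (rank of $h$ equal to $0$, $1$, or $2$), but this is exactly the content of the discussion following (\ref{exph}), which holds for admissible norms regardless of the rank of $h$. If one wished to be entirely self-contained, one could instead observe that a point with distinct principal curvatures has $d\eta_p$ diagonalizable with two one-dimensional eigenspaces, so that $\{V_1,V_2\}$ is a basis of $T_pM$; then the conclusion $h(V_1,V_2) = 0$ says precisely that $\{V_1,V_2\}$ is an $h$-orthogonal (in affine language, a \emph{conjugate}) basis, which is the natural substitute in this setting for the Euclidean fact that principal directions are perpendicular.
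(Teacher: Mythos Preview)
Your argument is correct, and it is shorter and more direct than the paper's own proof. You invoke the $h$-self-adjointness of $d\eta_p$ (stated just after (\ref{exph})) and then apply the standard linear-algebra step: distinct eigenvalues force $h$-orthogonality of the eigenvectors. The paper instead extends $V_1,V_2$ to local principal vector fields, computes $D_{V_1}D_{V_2}\eta$ and $D_{V_2}D_{V_1}\eta$ via the Gauss formula, and uses the flatness of $D$ (i.e., $D_{V_1}D_{V_2}\eta - D_{V_2}D_{V_1}\eta - D_{[V_1,V_2]}\eta = 0$) to compare transversal components, obtaining $(\lambda_1-\lambda_2)h(V_1,V_2)\eta = 0$. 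In effect the paper is re-deriving, in this special situation, the Ricci identity $h(d\eta_pX,Y)=h(X,d\eta_pY)$ that you simply quote; your route avoids the extension to local fields and the second-derivative computation, at the cost of relying on the earlier remark (which in turn uses the auxiliary Euclidean structure and the admissibility hypothesis). The paper's approach is more self-contained in the sense that it works directly from the Gauss equation and flatness, in the spirit of the argument used in Proposition~\ref{rank2}.
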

\begin{proof} Let $\lambda_1(p),\lambda_2(p) \in \mathbb{R}$ be the principal curvatures of $M$ at $p$, and assume that $\lambda_1(p) > \lambda_2(p)$. By continuity, this inequality is true in a small neighborhood $U$ containing $p$, and then we may consider local vector fields $V_1,V_2\in C^{\infty}(TU)$ such that $V_1$ and $V_2$ are principal directions associated to the respective principal curvatures at each point $p \in U$. Therefore, we have $D_{V_1}\eta = \lambda_1V_1$ and $D_{V_2}\eta = \lambda_2V_2$ in $U$. Differentiating these expressions, we get
\begin{align*} D_{V_2}D_{V_1}\eta = V_2(\lambda_1)V_1 + \lambda_1D_{V_2}V_1 = V_2(\lambda_1) + \lambda_1\nabla_{V_2}V_1 + \lambda_1h(V_2,V_1)\eta \ \ \mathrm{and}\\
D_{V_1}D_{V_2}\eta = V_1(\lambda_2)V_2 + \lambda_2D_{V_1}V_2 = V_1(\lambda_2)V_2 + \lambda_2\nabla_{V_1}V_2 + \lambda_2h(V_1,V_2)\eta.
\end{align*}
Now, since $0 = D_{V_1}D_{V_2}\eta - D_{V_2}D_{V_1}\eta - D_{[V_1,V_2]}\eta$, and the vectors $V_1,V_2,\nabla_{V_1}V_2,\nabla_{V_2}V_1$ and $D_{[V_1,V_2]}\eta$ are all tangential, it follows that $(\lambda_1-\lambda_2)h(V_1,V_2)\eta = 0$. Since the principal curvatures at $p$ are distinct, the desired follows.

\end{proof}

In Euclidean differential geometry, we say that two directions given by non-zero vectors $X,Y \in T_pM$ are \emph{conjugate} if $\langle X,d\xi_pY\rangle = 0$ (or, equivalently, $\langle Y,d\xi_pX\rangle = 0$), where we recall that $\xi$ is the usual Euclidean Gauss map. However, this definition does not depend on the inner product. Indeed, it is easy to see that $X$ and $Y$ are conjugate if and only if $D_XY$ is tangential. Therefore, we may extend this definition to the Minkowski context, and the conjugate directions in the Minkowski norm will be precisely the same as the conjugate directions in the Euclidean norm. In particular, from the previous lemma it follows that the principal directions at any point are always conjugate directions. We summarize all this as follows.

\begin{lemma} Let $f:M\rightarrow(\mathbb{R}^3,||\cdot||)$ be an immersed surface with Birkhoff-Gauss map $\eta$ and usual Euclidean Gauss map $\xi$. For any non-zero vectors $X,Y \in T_pM$, the following statements are equivalent:\\

\noindent\emph{\textbf{(a)}} the vectors $X$ and $Y$ are conjugate directions in the Euclidean sense, \\

\noindent\emph{\textbf{(b)}} the derivative $D_XY$ is tangential, and  \\

\noindent\emph{\textbf{(c)}} $h(X,Y) = 0$. \\
\end{lemma}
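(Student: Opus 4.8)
The plan is to establish the two equivalences (b) $\Leftrightarrow$ (c) and (a) $\Leftrightarrow$ (b) separately, in each case by simply writing down the relevant Gauss formula and reading off when a transversal (resp.\ normal) component vanishes. A small preliminary point I would address first: although $D_XY$ requires extending $X$ and $Y$ to local vector fields near $p$, whether the value $D_XY|_p$ is tangential does \emph{not} depend on the chosen extension. Indeed, in any Gauss decomposition of $D_XY$ the induced-connection term is tangential and the remaining summand is tensorial (it depends only on $X_p$ and $Y_p$), so tangentiality of $D_XY|_p$ is an intrinsic condition on the pair of directions; this makes statement (b) meaningful and, as we will see, visibly symmetric in $X$ and $Y$.

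For (b) $\Leftrightarrow$ (c) I would invoke the Gauss formula for the Birkhoff normal field, $D_XY = \nabla_XY + h(X,Y)\eta$ (this is (\ref{gauss}) with transversal field $\xi = \eta$, the push-forward being suppressed as in Section \ref{birkgaussmap}). Since $\eta(p)$ is transversal to $T_pM$, the decomposition (\ref{decomposition}) is a genuine direct sum, so $D_XY|_p \in T_pM$ if and only if its $\eta$-component $h(X,Y)$ vanishes. That is exactly the equivalence of (b) and (c).

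For (a) $\Leftrightarrow$ (b) I would use instead the Gauss formula for the Euclidean Gauss map $\xi$, namely $D_XY = \overline{\nabla}_XY + \overline{h}(X,Y)\xi$ with $\overline{h}$ the usual second fundamental form (this decomposition already appeared in the proof of Proposition \ref{rank01}). Pairing with $\xi$ and differentiating $\langle Y,\xi\rangle \equiv 0$ gives $\overline{h}(X,Y) = \langle D_XY,\xi\rangle = -\langle Y, D_X\xi\rangle = -\langle Y, d\xi_pX\rangle$. Hence $D_XY|_p$ is tangential if and only if $\langle Y, d\xi_pX\rangle = 0$; since $d\xi_p$ is self-adjoint, this is the same as $\langle X, d\xi_pY\rangle = 0$, which is statement (a). Combining the two equivalences yields (a) $\Leftrightarrow$ (b) $\Leftrightarrow$ (c).

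I do not anticipate a genuine obstacle here: the argument is essentially bookkeeping with two Gauss formulas. The only places deserving care are the well-definedness remark above and, if one wants to stress that conjugacy is a symmetric relation, the observation that $D_XY - D_YX = [X,Y]$ is tangential because $D$ is torsion-free — which shows directly that ``$D_XY$ tangential'' is symmetric in $X$ and $Y$, consistently with the symmetry of $h$ and of $\overline{h}$. As a conceptual byproduct, the chain (a) $\Leftrightarrow$ (c) records that the Minkowski affine fundamental form $h$ and the Euclidean second fundamental form $\overline h$ have the same null directions, a fact that was already implicit in the discussion preceding the lemma.
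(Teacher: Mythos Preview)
Your proposal is correct and follows essentially the same route as the paper: the equivalence (b) $\Leftrightarrow$ (c) is read off from the Gauss formula with transversal field $\eta$, and the link to (a) comes from expressing the Euclidean second fundamental form as $-\langle Y, d\xi_pX\rangle$. The only cosmetic difference is that the paper connects (c) directly to (a) by quoting the identity (\ref{exph}) for $h$, whereas you re-derive that identity on the spot via the Euclidean Gauss formula; your added remarks on well-definedness and on symmetry in $X$ and $Y$ are sound and do not alter the argument.
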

\begin{proof} From the equality $D_XY = \nabla_XY + h(X,Y)\eta$ we have that $D_XY$ is tangential if and only if $h(X,Y) = 0$. Now recall that from (\ref{exph}) we have
\begin{align*} h(X,Y) = \frac{\langle D_XY,\xi\rangle}{\langle \eta,\xi \rangle} = -\frac{\langle Y,D_X\xi\rangle}{\langle \eta,\xi\rangle}.
\end{align*}
Since the derivative $D_X\xi$ at a point $p \in M$ is precisely $d\xi_p(X)$, the proof is complete.

\end{proof}

Still in this direction, recall that positive Gaussian curvature has a sort of geometric consequence that can be regarded independently of the norm. Namely, if $M$ has positive Gaussian curvature at $p \in M$, then the normal vector to any curve on $M$ points at $p$ to the side of the tangent plane $T_pM$. This inspires the next proposition, which is important in Section 3 of \cite{diffgeom3}, for proving analogues to Hadamard theorems.

\begin{prop}\label{gaussposit} Assume, as usual, that $||\cdot||$ is an admissible norm, and let $f:M\rightarrow (\mathbb{R}^3,||\cdot||)$ be an immersed surface. Denote by $K$ and $K_e$ the Minkowski and the Euclidean Gaussian curvatures of $M$, respectively. For a point $p \in M$, the following statements are equivalent:\\

\noindent\emph{\textbf{(a)}} $K_e(p) > 0$,\\

\noindent\emph{\textbf{(b)}} $K(p) > 0$, and\\

\noindent\emph{\textbf{(c)}} $h$ is a (positive or negative) definite bilinear form at $p$.
\end{prop}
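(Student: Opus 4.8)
The plan is to establish the chain of equivalences by going through the Euclidean Gauss map $\xi$ and its second fundamental form $\overline{h}$, exploiting the relation $\eta = u \circ \xi$ together with the admissibility hypothesis. First I would recall from the Gauss formula for the Euclidean immersion that $\overline{h}(X,Y) = -\langle Y, d\xi_p X\rangle$, so that $K_e(p) = \det(d\xi_p)$ up to the usual sign conventions, and that the eigenvalues of $d\xi_p$ (the Euclidean principal curvatures) are both positive, both negative, or of mixed sign exactly according to whether $\overline{h}$ is positive definite, negative definite, or indefinite (the degenerate cases $\mathrm{rank}\,\overline{h}<2$ give $K_e(p)=0$). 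This already ties \textbf{(a)} to definiteness of $\overline{h}$; since the rank of the affine fundamental form is independent of the transversal field (as recalled from \cite[Proposition 2.5]{nomizu}), $\overline{h}$ and $h$ share the same rank, and in fact from expression (\ref{exph}) one sees $h(X,Y)$ is a positive multiple of $\overline{h}(X,Y)$ divided by $\langle \eta,\xi\rangle$ — so $h$ and $\overline{h}$ have the same definiteness type. That gives the equivalence of \textbf{(c)} with \textbf{(a)}.

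Next I would handle the equivalence of \textbf{(b)} with the others. Writing $d\eta_p = du_{\xi(p)} \circ d\xi_p$, I would take determinants: $\det(d\eta_p) = \det(du_{\xi(p)})\cdot \det(d\xi_p)$, and observe that $\det(du_{\xi(p)}) > 0$ because $u$ is the inverse of the usual Gauss map of the strictly convex surface $\partial B$, and under the admissibility hypothesis $du$ is an orientation-preserving isomorphism at every point (this is precisely the content of $\partial B$ having positive Euclidean Gaussian curvature everywhere). Hence $K(p) = \lambda_1\lambda_2 = \det(d\eta_p)$ has the same sign as $\det(d\xi_p)$, so $K(p) > 0 \iff K_e(p) > 0$. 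One should be a little careful that $K = \lambda_1\lambda_2$ really is $\det(d\eta_p)$ even at points where $d\eta_p$ may fail to be diagonalizable over $\mathbb{R}$ — but the earlier Propositions \ref{rank2}, \ref{rank2ind}, and \ref{rank01} guarantee a real eigenbasis in every case, so the product of eigenvalues equals the determinant unambiguously; and when $K_e(p)>0$, i.e. $\overline{h}$ is definite, Proposition \ref{rank2} applies.

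I expect the main subtlety — rather than a genuine obstacle — to be bookkeeping the signs and the roles of the two scalar factors $\langle \eta,\xi\rangle$ and $\det(du_{\xi(p)})$, making sure that what is being compared are genuinely the \emph{curvatures} (products/signs of eigenvalues) and not quantities that differ by a sign depending on a choice of orientation of $\eta$ versus $\xi$. Since $\eta = u\circ \xi$ with $u$ orientation-preserving, the sign of $\langle \eta,\xi\rangle$ is constant (positive, say, with the natural choices), which resolves this. I would therefore structure the proof as: (1) $\textbf{(a)} \Leftrightarrow \textbf{(c)}$ via $\overline{h}$ and the rank-independence plus (\ref{exph}); (2) $\textbf{(a)} \Leftrightarrow \textbf{(b)}$ via the determinant factorization $\det(d\eta_p) = \det(du_{\xi(p)})\det(d\xi_p)$ with $\det(du_{\xi(p)})>0$ by admissibility; and then the three-way equivalence follows. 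No step requires more than elementary linear algebra once these structural facts are in place.
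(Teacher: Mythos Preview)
Your argument is correct. For \textbf{(a)} $\Leftrightarrow$ \textbf{(c)} you follow essentially the same route as the paper, via (\ref{exph}) and the Euclidean second fundamental form. For \textbf{(a)} $\Leftrightarrow$ \textbf{(b)}, however, you take a different path: you use the determinant factorization $\det(d\eta_p)=\det(du_{\xi(p)})\det(d\xi_p)$ together with $\det(du_{\xi(p)})>0$, which is precisely the device the paper deploys earlier in the proof of Proposition~\ref{rank2ind} but does \emph{not} reuse here. Instead, the paper argues with the eigenvectors $V_1,V_2$ of $d\eta_p$: writing $d\xi_pV_i=\lambda_i\,du^{-1}_{\eta(p)}V_i$ and pairing with $V_i$ gives $\langle d\xi_pV_i,V_i\rangle=\lambda_i\langle du^{-1}_{\eta(p)}V_i,V_i\rangle$, and the sign of each $\lambda_i$ is then read off from the signs of the two inner products (controlled respectively by $K_e>0$ and by admissibility of the norm). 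Your determinant route is shorter and sidesteps the eigenvector discussion entirely; the paper's route has the mild advantage of yielding sign information about the individual $\lambda_i$ rather than only their product, though that extra information is not actually used in the proposition.
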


\begin{proof} The Euclidean second fundamental form is given by $(X,Y)\mapsto\langle Y,d\xi_pX\rangle$, and it is known that this bilinear form is definite if and only if $K_e(p) > 0$. Hence, from equality (\ref{exph}) it follows that \textbf{(a)}$\Leftrightarrow$\textbf{(c)}. \\

Assume that \textbf{(a)} holds, and let $V_1,V_2 \in T_pM$ be such that $d\eta_pV_1 = \lambda_1V_1$ and $d\eta_pV_2 = \lambda_2V_2$. We have to prove that $\lambda_1$ and $\lambda_2$ have the same sign. To do so, we notice that $d\xi_pV_1 = \lambda_1du_{\eta(p)}^{-1}V_1$, and the analogous equality holds for $V_2$. Therefore,

\begin{align*} \langle d\xi_pV_1,V_1\rangle = \lambda_1\langle du^{-1}_{\eta(p)}V_1,V_1\rangle \ \ \mathrm{and}\\
\langle d\xi_pV_2,V_2 \rangle = \lambda_2\langle du^{-1}_{\eta(p)}V_2,V_2\rangle.
\end{align*}
Since $K_e(p) > 0$, it follows that the left hand terms have the same sign. The same holds for the numbers $\langle du^{-1}_{\eta(p)}V_1,V_1 \rangle$ and $\langle du^{-1}_{\eta(p)}V_2,V_2\rangle$, due to the admissibility of the norm. Then $\lambda_1$ and $\lambda_2$ have the same sign, and this implies $K(p) > 0$. To prove \textbf{(b)}$\Rightarrow$\textbf{(a)}, the same argument works. Notice also that, in this case, the question whether $h$ is positive or negative only depends on the orientation chosen to $\eta$.

\end{proof}

Extending another concept from the Euclidean subcase, we say that a non-zero vector $X\in T_pM$ is an \emph{asymptotic direction} if $X$ is conjugate to itself. In other words, we say that $X \in T_pM\setminus\{0\}$ is an asymptotic direction whenever $D_XX \in T_pM$. It turns out that, in view of this definition, the asymptotic directions of a surface with respect to the Minkowski norm are precisely the same as the ones with respect to the Euclidean norm. \\

A connected regular curve $\gamma:J\rightarrow M$ is said to be an \emph{asymptotic curve} if the tangent line of $\gamma$ at any point is an asymptotic direction. It is clear that if the norm is admissible, then the asymptotic lines with respect to the norm are precisely the same as the ones with respect to Euclidean geometry. In Euclidean differential geometry, the asymptotic directions are characterized as the ones for which $\langle X,d\xi_pX\rangle = 0$. In the general case, when the space is endowed with a Minkowski norm, we clearly can characterize the asymptotic directions as the directions $X \in T_pM$ for which $h(X,X) = 0$. Therefore, as in the Euclidean subcase, if there exists an asymptotic direction at a point $p \in M$ where the eigenvalues of $d\eta_p$ are both non-zero, then we may guarantee the existence of a local asymptotic curve passing through $p$ via standard ordinary differential equations theory.  \\

Notice that the principal curvatures of a plane are $0$ at any point. Also, any Minkowski sphere has constant, equal principal curvatures at each of its points. Indeed, the Birkhoff-Gauss map can be regarded as the map $\eta:M\rightarrow \partial B$ given by $\eta(x)=\frac{1}{\rho}(x-p)$, where $p$ is the center $M$ and $\rho$ is the radius. Clearly, $d\eta_p = \frac{1}{\rho}\mathrm{Id}_{T_{p}M}$, and hence the principal curvatures of $M$ at any point $p$ are $\frac{1}{\rho}$.   \\

We say that a point $p \in M$ is an \emph{umbilic point} if the principal curvatures of $M$ at $p$ have the same value. Equivalently, a point $p \in M$ is umbilic when the differential of the Birkhoff normal vector field at $p$ is a multiple of the identity map. By the previous observation, all the points of a plane or of a Minkowski sphere are umbilic. The next proposition states that, as in the Euclidean subcase, these are the only possible surfaces with such property.

\begin{prop}\label{allumbilic} An immersed connected surface all whose points are umbilic is contained in a plane or in a Minkowski sphere.
\end{prop}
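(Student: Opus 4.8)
The plan is to follow the classical Euclidean argument, replacing the inner-product computations by the flatness and torsion-freeness of the ambient standard connection $D$. Write $\eta\colon M\to\partial B$ for the Birkhoff-Gauss map. The umbilicity hypothesis gives, for each $p$, a scalar $\lambda(p)$ with $d\eta_p=\lambda(p)\,\mathrm{Id}_{T_pM}$, equivalently $D_X\eta=\lambda X$ for every tangent vector $X$ (recall $D_X\eta|_p=d\eta_p(X)$, and that $T_{\eta(p)}\partial B$ has already been identified with $T_pM$). I would first record that $\lambda$ is smooth, since $\lambda=\lambda_1=\lambda_2=H$, or concretely because $\lambda(p)=\langle d\eta_p(X),X\rangle/\langle X,X\rangle$ for any fixed non-vanishing local vector field $X$, with $\langle\cdot,\cdot\rangle$ the auxiliary Euclidean inner product.

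The first real step is to show that $\lambda$ is constant. I would work in a coordinate chart of $M$ with coordinate fields $X=\partial_u$ and $Y=\partial_v$, so that $[X,Y]=0$, hence $D_XY=D_YX$ (torsion-freeness) and $D_XD_Y\eta=D_YD_X\eta$ (flatness of $D$). Expanding $D_X(\lambda Y)=D_Y(\lambda X)$ yields $X(\lambda)Y+\lambda D_XY=Y(\lambda)X+\lambda D_YX$, so $X(\lambda)Y=Y(\lambda)X$; linear independence of $X$ and $Y$ forces $X(\lambda)=Y(\lambda)=0$. Thus $\lambda$ is locally constant, and since $M$ is connected, $\lambda\equiv c$ for a constant $c\in\mathbb{R}$.

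Then I would split into two cases. If $c=0$, then $D_X\eta=0$ for every tangent $X$, so $\eta$ is locally constant, hence a fixed vector $\eta_0\in\partial B$. By uniqueness of Birkhoff orthogonality, every tangent plane $T_pM$ is then the single hyperplane $H$ with $\eta_0\dashv_B H$; choosing a linear functional $\phi$ with $\ker\phi=H$ and any curve $\gamma$ in $M$, we get $(\phi\circ f\circ\gamma)'=\phi(\gamma')=0$, so $\phi\circ f$ is constant and $f(M)$ is contained in an affine plane. If $c\neq0$, then $D_X(\eta-cf)=D_X\eta-c\,D_Xf=\lambda X-cX=0$ for every tangent $X$ (using $D_Xf=f_*(X)$, identified with $X$), so $\eta-cf$ equals a constant vector $p_0$. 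Setting $q_0=-p_0/c$ gives $f(x)-q_0=\tfrac1c\eta(x)$, hence $||f(x)-q_0||=1/|c|$ because $||\eta(x)||=1$; that is, $f(M)$ is contained in the Minkowski sphere of center $q_0$ and radius $1/|c|$.

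The step I expect to be the main obstacle — really the only subtle point — is the constancy of $\lambda$: one must combine $D_XD_Y\eta=D_YD_X\eta$ correctly with $D_XY=D_YX$ (this is exactly where flatness and torsion-freeness of the ambient connection enter, just as in the proofs of Proposition~\ref{rank2} and Lemma~\ref{princconj}), and one must know that $\lambda$ is differentiable so that $X(\lambda)$ is meaningful, which is why I record its smoothness at the outset. The remaining steps are routine, relying only on the identification $T_{\eta(p)}\partial B\simeq T_pM$ established in the proof that the immersion is equiaffine, together with the uniqueness of Birkhoff orthogonality.
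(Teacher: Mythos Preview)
Your proof is correct and follows essentially the same route as the paper: show $\lambda$ is constant by exploiting flatness and torsion-freeness of $D$, then split into the cases $\lambda=0$ and $\lambda\neq0$. Your use of coordinate fields $\partial_u,\partial_v$ (so that $[X,Y]=0$ identically) is a slight streamlining of the paper's argument, which instead builds $X,Y$ by parallel transport along curves to force $\nabla_XY|_p=\nabla_YX|_p=0$ and then invokes the Gauss formula; your direct handling of the $\lambda=0$ case via a linear functional likewise replaces the paper's appeal to the Euclidean result, but the overall strategy is the same.
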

\begin{proof} For each $p \in M$ we have that $d\eta_p(X) = \lambda(p)X$ for any $X \in T_pM$, where $\lambda:M\rightarrow\mathbb{R}$ is a smooth function. Our first step is to prove that the function $\lambda$ is constant. For this sake, fix linearly independent vectors $X,Y \in T_pM$ and denote also by $X$ and $Y$ the parallel transport of $X$ through a curve tangent to $Y$ at $p$, and the parallel transport of $Y$ through a curve tangent to $X$ at $p$, both with respect to the induced connection $\nabla$. We have then $\nabla_XY|_p = \nabla_YX|_p = 0$. Now, extending smoothly both vector fields to an open neighborhood of $p$, we may calculate at $p$
\begin{align*} D_YD_X\eta = D_Y(\lambda X) = Y(\lambda)X + \lambda h(Y,X)\eta \ \ \mathrm{and} \\
D_XD_Y\eta = X(\lambda)Y + \lambda h(X,Y)\eta.
\end{align*}
Since $D$ is a flat connection, we write
\begin{align*} 0 = D_YD_X\eta - D_XD_Y\eta - D_{[X,Y]}\eta = Y(\lambda)X-X(\lambda)Y - \lambda[X,Y].
\end{align*}
Recalling that $\nabla$ is a torsion-free connection, it follows that $[X,Y] = 0$ at $p$. Hence we have $X(\lambda)Y - Y(\lambda)X = 0$, and this gives $X(\lambda) = Y(\lambda) = 0$ (since $X$ and $Y$ are linearly independent). This argument shows that the derivative of the function $\lambda$ at any point $p \in M$ and with respect to any direction $X \in T_pM$ equals $0$. It follows that $\lambda$ is constant. \\

If $\lambda = 0$, then the Birkhoff-Gauss map is constant, and this means that the Birkhoff normal vector is the same for each point of $M$. In particular, the Euclidean normal vector is also the same for every point, and therefore $M$ is contained in a plane (see \cite{manfredo}). If $\lambda \neq 0$, then the map $x \in M \mapsto x - \frac{1}{\lambda}\eta(x) \in \mathbb{R}^3$ is clearly a constant map. Indeed, for any point $p \in M$ and any direction $X \in T_pM$, we have
\begin{align*} D_X\left(x - \frac{1}{\lambda}\eta(x)\right) = X - \frac{1}{\lambda}(\lambda X) = 0.
\end{align*}
Thus, $M$ is contained in the Minkowski sphere whose center is this constant point, and whose radius equals $\frac{1}{\lambda}$.

\end{proof}

A regular connected curve $\gamma:J\subseteq\mathbb{R}\rightarrow M$ is said to be a \emph{curvature line} if for each $t \in J$ the tangent vector $\gamma'(t)$ gives a principal direction at $\gamma(t)$. We will characterize the curvature lines of a surface in a Minkowski space in a similar manner as it is done for the Euclidean subcase.

\begin{prop} Let $\gamma:J\rightarrow M$ be a regular connected curve. Then $\gamma$ is a curvature line of $M$ if and only if there exists a function $\lambda:J\rightarrow\mathbb{R}$ such that
\begin{align*} (\eta\circ\gamma)'(t) = \lambda(t)\gamma'(t),
\end{align*}
for each $t \in J$.
\end{prop}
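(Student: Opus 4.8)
The plan is to reduce the statement directly to the definitions of the Birkhoff-Gauss map and of principal directions, with essentially no computation. First I would recall that, by the chain rule for the ambient flat connection $D$, for any $t \in J$ we have
\[
(\eta\circ\gamma)'(t) = D_{\gamma'(t)}\eta = d\eta_{\gamma(t)}\big(\gamma'(t)\big),
\]
where the last equality is the identity $D_X\eta|_p = d\eta_p(X)$ recorded just after the lemma asserting that our immersion is equiaffine. By that same lemma, the vector $(\eta\circ\gamma)'(t)$ lies in $T_{\eta(\gamma(t))}\partial B \simeq T_{\gamma(t)}M$, so both sides of the asserted equation belong to the same tangent plane and the equation is meaningful.

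Next I would unwind the two implications. If $\gamma$ is a curvature line, then by definition $\gamma'(t)$ is a principal direction at $\gamma(t)$, i.e. an eigenvector of $d\eta_{\gamma(t)}$; hence there is a scalar $\lambda(t)$ with $d\eta_{\gamma(t)}(\gamma'(t)) = \lambda(t)\gamma'(t)$, and combining this with the displayed formula gives $(\eta\circ\gamma)'(t) = \lambda(t)\gamma'(t)$. Since $\gamma$ is regular, $\gamma'(t)\neq 0$, so $\lambda(t)$ is uniquely determined by this equation and, being a quotient of smooth quantities, defines a smooth function on $J$. Conversely, if such a $\lambda$ exists, then the displayed formula yields $d\eta_{\gamma(t)}(\gamma'(t)) = \lambda(t)\gamma'(t)$, so $\gamma'(t)$ is an eigenvector of $d\eta_{\gamma(t)}$, i.e. a principal direction at $\gamma(t)$; thus $\gamma$ is a curvature line by definition.

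There is no genuine obstacle here: the only point requiring a little care is ensuring that the derivative $(\eta\circ\gamma)'(t)$, a priori a vector of $T_{\eta(\gamma(t))}\partial B$, is legitimately identified with a vector of $T_{\gamma(t)}M$ so that the eigenvalue equation makes sense — this is exactly the content of the equiaffine lemma and the remark following it, which I would invoke. I would also add a closing remark that the scalar $\lambda(t)$ is, at each $t$, one of the principal curvatures $\lambda_1,\lambda_2$ of $M$ at $\gamma(t)$, which links the statement back to the definition of principal curvatures and makes transparent the analogy with the classical Rodrigues formula.
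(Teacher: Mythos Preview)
Your proposal is correct and follows essentially the same approach as the paper: both directions are obtained immediately from the chain-rule identity $(\eta\circ\gamma)'(t)=d\eta_{\gamma(t)}(\gamma'(t))$ together with the definition of principal directions as eigenvectors of $d\eta$. Your extra remarks on the tangent-space identification, the smoothness of $\lambda$, and the analogy with the Rodrigues formula are pleasant additions but not needed for the bare statement.
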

\begin{proof} First suppose that $\gamma$ is a curvature line. Then, for each $t \in J$, we have that $\gamma'(t)$ is a principal direction, and therefore $(\eta\circ\gamma)'(t) = d\eta_{\gamma(t)}(\gamma'(t)) = \lambda(t) \gamma'(t)$, where $\lambda(t)$ is an eigenvalue of $d\eta_{\gamma(t)}$. \\

Conversely, assume that $\gamma$ is a connected curve for which $(\eta\circ\gamma)'(t) = \lambda(t)\gamma'(t)$ holds for some function $\lambda:J\rightarrow\mathbb{R}$. For each $t \in J$ we have that $\gamma'(t)$ is an eigenvector of $d\eta_{\gamma(t)}$. Thus, $\gamma$ is a curvature line.

\end{proof}

\section{An analogue of the normal curvature}\label{normal}

Throughout this section \textbf{we still always assume that the norm fixed in the space is admissible}. As usual, we let $u:\partial B_e\rightarrow \partial B$ be the inverse of the Euclidean Gauss map of $\partial B$. Recall also that we are denoting by $\langle\cdot,\cdot\rangle$ the \emph{usual inner product in} $\mathbb{R}^3$. Given an immersed surface $f:M\rightarrow\mathbb{R}^3$, we still denote by $\eta$ and $\xi$ the Birkhoff-Gauss and usual Euclidean Gauss maps of $M$, respectively. \\

In Euclidean differential geometry, the \emph{normal curvature} of a surface $M$ in a given point $p \in M$ and a given direction $X \in T_pM$ can be regarded as the (signed) length of the projection of the normal vector of a curve in $M$, passing through $p$ with tangent vector $X$, onto $\xi(p)$. In particular, the considered curve can be taken as the intersection of the plane spanned by $\xi(p)$ and $X$, and therefore the normal curvature is the usual curvature of this (plane) curve at $p$ (see \cite{manfredo}). This observation allows us to extend this notion to our general case. Let $f:M\rightarrow(\mathbb{R}^3,||\cdot||)$ be an immersed surface, and fix $p \in M$ and $X \in S_p\subseteq T_pM$, where $S_p$ denotes the unit circle of $T_pM$. Denote by $H$ the plane spanned by $\eta(p)$ and $X$. Let $\gamma:(-\varepsilon,\varepsilon)\rightarrow M$ be a local arc-length parametrization of the curve given by the intersection of the plane $p\oplus H$ with $M$, and assume that $\gamma(0) = p$ and $\gamma'(0) = X$.

\begin{definition}The \emph{Minkowski normal curvature} of $M$ at $p \in M$ in the direction $X \in S_p$ is the circular curvature of $\gamma$ at $p$ in the plane geometry endowed in $H$ by the norm $||\cdot||$ (in other words, the geometry in $H$ whose unit circle is the intersection of $\partial B$ with $H$). We will denote this number by $k_{M,p}(X)$.
\end{definition}

We will give a formula for the Minkowski normal curvature in terms of the auxiliary Euclidean structure fixed in the plane. To do so, we first notice that this is essentially a problem in the plane $H$. Following \cite{Ba-Ma-Sho}, the circular curvature of $\gamma$ at $p$ is the ratio between its usual plane Euclidean curvature and the usual plane Euclidean curvature of the circle $\partial B\cap H$ at a point whose tangent lies in the direction $X$.

\begin{teo} For any $p \in M$ and $X \in T_pM$ we have
\begin{align}\label{normalcurv} k_{M,p}(X) = \frac{\langle du^{-1}_{\eta(p)}X,d\eta_pX\rangle}{\langle du^{-1}_{\eta(p)}X,X\rangle},
\end{align}
where we are considering the natural identification $T_pM \simeq T_{\eta(p)}\partial B \simeq T_{\xi(p)}\partial B_e$.
\end{teo}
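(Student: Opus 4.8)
The plan is to work entirely inside the two-plane $H=\mathrm{span}\{\eta(p),X\}$, reducing the statement to a comparison of two ordinary Euclidean plane curvatures. As recalled just before the statement (following \cite{Ba-Ma-Sho}), the circular curvature $k_{M,p}(X)$ is the ratio of the (signed) Euclidean curvature of $\gamma$ at $p$ to the Euclidean curvature of the Minkowski unit circle $C:=\partial B\cap H$ of $(H,||\cdot||)$ at the point whose tangent line points in the direction $X$, both measured with respect to a fixed orthonormal frame $\{X,N\}$ of $H$. Here I may take $X$ to be Euclidean-unit, since both sides of (\ref{normalcurv}) are invariant under rescaling $X$, and $N$ denotes a fixed Euclidean-unit vector in $H$ with $\langle N,X\rangle=0$.

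The first key point is that the point of $C$ singled out above is exactly $\eta(p)$: indeed $\eta(p)\in\partial B\cap H$, the tangent line of $C$ at $\eta(p)$ is $T_{\eta(p)}\partial B\cap H$, and by the very definition of the Birkhoff normal (and of the map $u$) one has $T_{\eta(p)}\partial B=T_pM$ as linear subspaces of $\mathbb{R}^3$, so this tangent line is $\mathbb{R}X$. Thus I must compare the Euclidean curvature of $\gamma$ at $p$ with that of $C$ at $\eta(p)$; at both points the curve lies in $H$, has tangent $X$, and therefore second-derivative vector proportional to $N$.

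For $\gamma$, parametrized by Euclidean arc length with $\gamma(0)=p$, $\gamma'(0)=X$, the Gauss formula (\ref{gauss}) for the Birkhoff normal gives $\gamma''(0)=\nabla_XX+h(X,X)\eta(p)$, where $\nabla_XX$ is tangent to $M$. Writing $\gamma''(0)=\kappa_e^{\gamma}N$ and taking the inner product with the Euclidean normal $\xi(p)$ of $M$ (which annihilates the term $\nabla_XX$) yields $\kappa_e^{\gamma}\langle N,\xi(p)\rangle=h(X,X)\langle\eta(p),\xi(p)\rangle$. For $C$, parametrized by Euclidean arc length with $\beta(0)=\eta(p)$, $\beta'(0)=X$: since $C\subseteq\partial B$ and the Euclidean Gauss map of $\partial B$ at $\eta(p)=u(\xi(p))$ is precisely $\xi(p)$ (that is, $u^{-1}$ is the Euclidean Gauss map of $\partial B$), the Euclidean Gauss formula for $\partial B$ gives $\beta''(0)=\overline{\nabla}_XX-\langle du^{-1}_{\eta(p)}X,X\rangle\,\xi(p)$ with $\overline{\nabla}_XX$ tangent to $\partial B$; writing $\beta''(0)=\kappa_e^{C}N$ and again pairing with $\xi(p)$ gives $\kappa_e^{C}\langle N,\xi(p)\rangle=-\langle du^{-1}_{\eta(p)}X,X\rangle$. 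Dividing the two identities, the common factor $\langle N,\xi(p)\rangle$ cancels, so
\begin{align*}
k_{M,p}(X)=\frac{\kappa_e^{\gamma}}{\kappa_e^{C}}=\frac{h(X,X)\,\langle\eta(p),\xi(p)\rangle}{-\langle du^{-1}_{\eta(p)}X,X\rangle},
\end{align*}
and substituting the expression (\ref{exph}) for $h$, which gives $h(X,X)\langle\eta(p),\xi(p)\rangle=-\langle du^{-1}_{\eta(p)}X,d\eta_pX\rangle$, produces exactly (\ref{normalcurv}).

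The main obstacle is not any single computation but the bookkeeping of the auxiliary Euclidean structure and of orientations: one must ensure that the two Euclidean curvatures forming the ratio are signed consistently (same orientation of $H$, same choice of $N$, and that it is $\eta(p)$ rather than $-\eta(p)$ that the orientation convention in the definition of circular curvature selects on $C$), and that the identifications $T_pM\simeq T_{\eta(p)}\partial B\simeq T_{\xi(p)}\partial B_e$ invoked throughout are genuine equalities of subspaces of $\mathbb{R}^3$. Once this is set up correctly, the cancellation of $\langle N,\xi(p)\rangle$ — which is precisely what makes the final answer depend only on $d\eta_p$ and $u$, and not on the oblique angle between $H$ and $T_pM$ — is automatic.
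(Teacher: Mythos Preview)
Your proof is correct and follows essentially the same route as the paper's: both arguments reduce the problem to the ratio of the Euclidean plane curvatures of $\gamma$ at $p$ and of $\partial B\cap H$ at $\eta(p)$, express each of these curvatures as the normal curvature of the corresponding ambient surface ($M$, resp.\ $\partial B$) divided by the common oblique factor $\langle N,\xi(p)\rangle$ (the paper writes $\langle\zeta,\xi(p)\rangle$), and then cancel. The only cosmetic difference is that the paper invokes the Meusnier-type identity from \cite{manfredo} to get equations (\ref{eq1}) and (\ref{eq2}) and works directly with $\langle d\xi_pX,X\rangle$, whereas you rederive that identity via the Gauss formula and route the computation through $h(X,X)$ before substituting (\ref{exph}); these are equivalent, and your explicit identification of $\eta(p)$ as the relevant point on $\partial B\cap H$ is a detail the paper leaves implicit.
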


\begin{proof} Let us first look at $\partial B$ as an immersed surface. The Euclidean normal curvature of $\partial B$ at $\eta(p)$ in the direction $X$ is given by $\langle du^{-1}_{\eta(p)}X,X\rangle$, since $u^{-1}$ is the Euclidean Gauss map of $\partial B$. Following \cite{manfredo}, this normal curvature can be obtained from the curve $\varphi:=\partial B\cap H$ as
\begin{align}\label{eq1} -\langle du^{-1}_{\eta(p)}X,X\rangle = k_{\varphi}(\eta(p))\langle \zeta,\xi(p)\rangle,
\end{align}
where $k_{\varphi}(\eta(p))$ is the (plane) Euclidean curvature of the curve $\varphi$ at $\eta(p)$, and $\zeta$ is the unit Euclidean normal vector to $X$ at the plane $H$, which is also the Euclidean normal vector of the curve $\varphi$ at $\eta(p)$. On the other hand, the Euclidean normal curvature of $M$ at $p$ in the direction $X$ is given by $\langle d\xi_pX,X\rangle$, since $\xi$ is the Euclidean Gauss map of $M$. As in the previous argument, this normal curvature can be obtained from the curve $\gamma$ as
\begin{align}\label{eq2} -\langle d\xi_pX,X\rangle = k_{\gamma}(p)\langle \zeta,\xi(p)\rangle,
\end{align}
where $k_{\gamma}(p)$ is the (plane) Euclidean curvature of $\gamma$ at $p$. Now, from (\ref{eq1}) and (\ref{eq2}) we have

\begin{align*} k_{M,p}(X) = \frac{k_{\gamma}(p)}{k_{\varphi}(\eta(p))} = \frac{\langle d\xi_pX,X\rangle}{\langle du^{-1}_{\eta(p)}X,X\rangle}.
\end{align*}
Since $\xi= u^{-1}\circ\eta$, and since the differential of the Euclidean Gauss map of any immersed surface is self-adjoint at any point, it follows that $\langle d\xi_pX,X\rangle = \langle du^{-1}_{\eta(p)}X,d\eta_pX\rangle$. This gives equality (\ref{normalcurv}).

\end{proof}

We will derive three consequences of this formula (see the next three corollaries). First we prove that, as in the Euclidean subcase, if the normal curvature of a surface is constant, then this surface must be a plane or a Minkowski circle. After that, we will find a relation between the principal directions and the normal curvature. Finally, we show that the asymptotic directions at a point can be characterized in terms of the normal curvature.

\begin{coro}\label{normalumb} The Minkowski normal curvature of an immersed connected surface is constant if and only if this surface is contained in a plane or in a Minkowski sphere. The first case occurs if and only if $k_{M,p} = 0$, and in the second case the radius of the sphere is given by $\left|k_{M,p}\right|^{-1}$.
\end{coro}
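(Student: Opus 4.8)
The plan is to mimic the Euclidean proof of the analogous statement, using formula (\ref{normalcurv}) to convert the hypothesis ``$k_{M,p}(X)$ is constant'' into an eigenvalue condition on $d\eta_p$, and then to invoke Proposition \ref{allumbilic}. First I would observe that the hypothesis is that $k_{M,p}(X)$ takes the same value $c$ for \emph{every} $p \in M$ and \emph{every} unit $X \in S_p$; by homogeneity of the formula in $X$, this is equivalent to saying that for all $p$ and all nonzero $X \in T_pM$,
\begin{align*} \langle du^{-1}_{\eta(p)}X, d\eta_p X\rangle = c\,\langle du^{-1}_{\eta(p)}X, X\rangle.
\end{align*}
Since $du^{-1}_{\eta(p)}$ is a symmetric positive-definite operator (here the admissibility of the norm is essential — it guarantees $du^{-1}_{\eta(p)}$ is an isomorphism and, by strict convexity of $\partial B$, positive definite), the bilinear form $(X,Y) \mapsto \langle du^{-1}_{\eta(p)}X, Y\rangle$ is a genuine inner product on $T_pM$. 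So the displayed equation says that $d\eta_p - c\,\mathrm{Id}$ is self-adjoint with respect to this inner product (which we already knew, since $d\eta_p$ is self-adjoint with respect to $h$, cf. the discussion after (\ref{exph})) \emph{and} has the property that $\langle (d\eta_p - c\,\mathrm{Id})X, X\rangle_{du^{-1}} = 0$ for all $X$. A self-adjoint operator whose associated quadratic form vanishes identically is the zero operator; hence $d\eta_p = c\,\mathrm{Id}_{T_pM}$ for every $p$.

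Once we have $d\eta_p = c\,\mathrm{Id}$ at every point, every point of $M$ is umbilic, so Proposition \ref{allumbilic} applies and $M$ is contained in a plane (if $c = 0$) or in a Minkowski sphere (if $c \neq 0$). For the statement about the radius: in the computation preceding Proposition \ref{allumbilic}, the Birkhoff-Gauss map of the Minkowski sphere of radius $\rho$ centered at $q$ is $\eta(x) = \frac{1}{\rho}(x - q)$, so $d\eta_p = \frac{1}{\rho}\mathrm{Id}$; matching this with $d\eta_p = c\,\mathrm{Id}$ gives $\rho = 1/|c|$, i.e., $\rho = |k_{M,p}|^{-1}$ (the sign of $c$ depends only on the orientation chosen for $\eta$). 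Conversely, for a plane all principal curvatures vanish, so $k_{M,p} \equiv 0$ by (\ref{normalcurv}); and for a Minkowski sphere of radius $\rho$, since $d\eta_p = \frac{1}{\rho}\mathrm{Id}$, formula (\ref{normalcurv}) immediately gives $k_{M,p}(X) = \frac{1}{\rho}$ for every direction $X$, hence constant. This establishes both implications.

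The only genuinely delicate point is the direction ``constant normal curvature $\Rightarrow$ umbilic everywhere'': one must be careful that the vanishing of the quadratic form $X \mapsto \langle (d\eta_p - c\,\mathrm{Id})X, X\rangle_{du^{-1}}$ really does force the operator to be zero, which requires that $\langle \cdot, \cdot\rangle_{du^{-1}}$ be a non-degenerate (indeed positive definite) inner product — this is exactly where admissibility enters and should be stated explicitly. Everything else is routine: matching differentials of explicit maps and citing \cite{manfredo} for the Euclidean facts about planes. I do not expect any real obstacle beyond organizing these observations cleanly.
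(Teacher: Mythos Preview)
Your argument is correct and reaches the same conclusion $d\eta_p = c\,\mathrm{Id}_{T_pM}$ as the paper, but by a cleaner abstract route. The paper proceeds by an explicit coordinate computation: it diagonalizes $du^{-1}_{\eta(p)}$ in an orthonormal basis $\{E_1,E_2\}$ with eigenvalues $\mu_1,\mu_2>0$, writes $d\eta_pE_i = \lambda E_i + (\text{off-diagonal term})$, and then uses two linear relations---one from plugging $X=E_1+E_2$ into the normal-curvature identity, the other from the self-adjointness of $d\xi_p$---to force the off-diagonal terms to vanish. You instead observe that $(X,Y)\mapsto\langle du^{-1}_{\eta(p)}X,Y\rangle$ is a genuine inner product (by admissibility) and that $d\eta_p$ is self-adjoint for it, so the vanishing of the associated quadratic form of $d\eta_p-c\,\mathrm{Id}$ kills the operator outright. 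Your approach is shorter and basis-free; the paper's has the modest advantage of making the role of each hypothesis (admissibility gives $\mu_i>0$; self-adjointness of $d\xi_p$ gives the second relation) visible at the level of individual equations.

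One small correction: the justification you cite for the self-adjointness of $d\eta_p$ with respect to $\langle du^{-1}_{\eta(p)}\cdot,\cdot\rangle$ is not quite the right one. The sentence after (\ref{exph}) asserts that $d\eta_p$ is self-adjoint with respect to $h$ (the Ricci equation), which is a different statement. What you actually need is $\langle du^{-1}_{\eta(p)}X,d\eta_pY\rangle = \langle du^{-1}_{\eta(p)}Y,d\eta_pX\rangle$, and this follows directly from the \emph{symmetry} of $h$ via the second expression in (\ref{exph}), or equivalently from the self-adjointness of $d\xi_p = du^{-1}_{\eta(p)}\circ d\eta_p$ together with that of $du^{-1}_{\eta(p)}$. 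The fact is true and easy; only the pointer should be adjusted.
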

\begin{proof} First, it is clear that the Minkowski normal curvature of a plane is always zero, since each plane section yields a straight line segment. Also, if $M$ is a Minkowski sphere of radius $\lambda\in\mathbb{R}$, then we may assume for simplicity that it is centered at the origin, and hence the Birkhoff-Gauss map can be regarded as $\eta(p) = \frac{1}{\lambda}p$. Therefore, $d\eta_p(X) = \frac{1}{\lambda}X$, and we have $k_{M,p}(X) = \frac{1}{\lambda}$.  \\

Assume now that the Minkowski normal curvature of a surface $M$ equals $\lambda \in \mathbb{R}$. Then we have that
\begin{align}\label{eq3} \langle du^{-1}_{\eta(p)}X,d\eta_pX\rangle = \lambda\langle du^{-1}_{\eta(p)}X,X\rangle,
\end{align}
for any $p \in M$ and non-zero $X \in T_pM$. Recalling that $du^{-1}_{\eta(p)}$ is a self-adjoint operator, we may take orthogonal unit (in the Euclidean norm) vectors $E_1,E_2 \in T_pM$ such that
\begin{align*} du^{-1}_{\eta(p)}E_1 = \mu_1E_1 \ \ \mathrm{and} \\
du^{-1}_{\eta(p)}E_2 = \mu_2E_2,
\end{align*}
for some $\mu_1,\mu_2 \in \mathbb{R}$. Moreover, since the norm is admissible we may assume that $\mu_1,\mu_2 > 0$. From (\ref{eq3}), it follows that
\begin{align*} d\eta_pE_1 = \lambda E_1 + \alpha E_2 \ \ \mathrm{and} \\
d\eta_p E_2 = \lambda E_2 + \beta E_1,
\end{align*}
for some numbers $\alpha,\beta \in \mathbb{R}$. Applying the equality (\ref{eq3}) to the vector $E_1 + E_2$, we get
\begin{align}\label{eq4} \mu_1\beta + \mu_2\alpha = 0.
\end{align}
In order to obtain another relation between these numbers, we recall that $\xi$ is the Euclidean Gauss map of $M$, and hence $d\xi_p$ is self-adjoint for any $p \in M$. We may write then $\langle d\xi_pE_1,E_2\rangle = \langle E_1,d\xi_pE_2\rangle$. But since $\xi = u\circ \eta$, this equality reads
\begin{align*} \left\langle \frac{\lambda}{\mu_1}E_1+\frac{\alpha}{\mu_2}E_2,E_2\right\rangle =  \left\langle E_1,\frac{\lambda}{\mu_2}E_2 + \frac{\beta}{\mu_1}E_1\right\rangle.
\end{align*}
Therefore we have $\alpha\mu_1 - \beta\mu_2 = 0$. This equality, together with (\ref{eq4}) and the fact that $\mu_1,\mu_2 \neq 0$, gives that $\alpha = \beta = 0$. This means that, for each $p \in M$, we have $d\eta_p = \lambda \mathrm{Id}|_{T_pM}$. The result comes now from Proposition \ref{allumbilic}.

\end{proof}

\begin{remark} We do not need an auxiliary Euclidean structure to prove that any Minkowski sphere has constant Minkowski normal curvature. Indeed, any normal section $H$ on a Minkowski sphere will yield a curve which is a homothet of the unit circle of $(H,||\cdot||)$. Hence, its circular curvature will be the inverse of the radius.
\end{remark}

\begin{coro}\label{princnorm} Let $V \in T_pM$ be a principal direction at $p \in M$, with associated principal curvature $\lambda$. Then $k_{M,P}(V) = \lambda$.
\end{coro}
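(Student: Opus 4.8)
The plan is to read off the result directly from the formula (\ref{normalcurv}) for the Minkowski normal curvature established in the preceding theorem. Since $V$ is a principal direction at $p$ with associated principal curvature $\lambda$, by definition we have $d\eta_p V = \lambda V$. Substituting $X = V$ into (\ref{normalcurv}) yields
\[
k_{M,p}(V) = \frac{\langle du^{-1}_{\eta(p)}V, d\eta_p V\rangle}{\langle du^{-1}_{\eta(p)}V, V\rangle} = \frac{\lambda\,\langle du^{-1}_{\eta(p)}V, V\rangle}{\langle du^{-1}_{\eta(p)}V, V\rangle},
\]
so the corollary follows the moment we know that the denominator does not vanish.

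To justify $\langle du^{-1}_{\eta(p)}V, V\rangle \neq 0$, recall that $u^{-1}$ is the Euclidean Gauss map of the unit sphere $\partial B$, whence $du^{-1}_{\eta(p)}$ is self-adjoint and its associated quadratic form is, up to sign, the Euclidean second fundamental form of $\partial B$ at $\eta(p)$. Because the norm is assumed admissible, $\partial B$ has positive Euclidean Gaussian curvature at every point, so $du^{-1}_{\eta(p)}$ is definite; in particular $\langle du^{-1}_{\eta(p)}V, V\rangle \neq 0$ for the nonzero vector $V$. Hence the quotient above simplifies to $\lambda$, as claimed.

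I do not anticipate any genuine difficulty here: the entire content of the statement is already packaged in equation (\ref{normalcurv}), and the only point deserving an explicit remark is the non-vanishing of $\langle du^{-1}_{\eta(p)}V, V\rangle$, which is an immediate consequence of the standing admissibility hypothesis on the norm.
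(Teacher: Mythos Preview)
Your proof is correct and follows exactly the same approach as the paper's own proof: substitute $d\eta_pV = \lambda V$ into formula (\ref{normalcurv}) and cancel. Your additional remark justifying that the denominator $\langle du^{-1}_{\eta(p)}V,V\rangle$ is nonzero via admissibility is a welcome clarification that the paper leaves implicit.
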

\begin{proof} Since $d\eta_p(V) = \lambda V$, it follows that
\begin{align*} k_{M,p}(V) = \frac{\langle du^{-1}_{\eta(p)}V,\lambda V\rangle}{\langle du^{-1}_{\eta(p)}V,V\rangle} = \lambda.
\end{align*}

\end{proof}

\begin{coro}\label{asympnorm} A non-zero vector $X \in T_pM$ is an asymptotic direction if and only if $k_{M,p}(X) = 0$.
\end{coro}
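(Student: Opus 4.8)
The plan is to use the formula for the Minkowski normal curvature from the previous theorem, namely
\begin{align*} k_{M,p}(X) = \frac{\langle du^{-1}_{\eta(p)}X,d\eta_pX\rangle}{\langle du^{-1}_{\eta(p)}X,X\rangle},
\end{align*}
together with the characterization of asymptotic directions established earlier, namely that $X \in T_pM\setminus\{0\}$ is asymptotic if and only if $h(X,X) = 0$, and the expression (\ref{exph}) for the affine fundamental form in terms of the auxiliary Euclidean structure.

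First I would observe that the denominator $\langle du^{-1}_{\eta(p)}X,X\rangle$ in the formula for $k_{M,p}(X)$ is nonzero: since the norm is admissible, $\partial B$ has positive Euclidean Gaussian curvature everywhere, so $du^{-1}_{\eta(p)}$ is a positive-definite self-adjoint operator, and hence $\langle du^{-1}_{\eta(p)}X,X\rangle > 0$ for every nonzero $X$. Therefore $k_{M,p}(X) = 0$ if and only if the numerator $\langle du^{-1}_{\eta(p)}X,d\eta_pX\rangle$ vanishes. Next I would invoke the expression (\ref{exph}), which gives
\begin{align*} h(X,X) = -\frac{\langle du^{-1}_{\eta(p)}X,d\eta_pX\rangle}{\langle \eta,\xi\rangle}.
\end{align*}
Since $\langle\eta,\xi\rangle \neq 0$ (the Birkhoff normal $\eta$ and the Euclidean normal $\xi$ to the same tangent plane are never orthogonal — indeed $\eta$ is not tangent to $M$), this shows that the numerator vanishes if and only if $h(X,X) = 0$. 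Combining the two equivalences, $k_{M,p}(X) = 0$ if and only if $h(X,X) = 0$, which by the earlier characterization is exactly the condition for $X$ to be an asymptotic direction.

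This proof is essentially immediate once the correct formula is in hand, so I do not anticipate any real obstacle; the only point requiring a moment of care is the nonvanishing of the denominator and of $\langle\eta,\xi\rangle$, both of which follow directly from admissibility and transversality respectively. An alternative route, avoiding even the explicit use of (\ref{exph}), would be to note that $\langle du^{-1}_{\eta(p)}X,d\eta_pX\rangle = \langle d\xi_pX,X\rangle$ (using $\xi = u^{-1}\circ\eta$ and self-adjointness of the Euclidean Gauss map differential, as was already done in the proof of the theorem), so that $k_{M,p}(X)$ vanishes precisely when the Euclidean second fundamental form vanishes on $X$; but since the asymptotic directions in the Minkowski sense coincide with those in the Euclidean sense (as remarked in Section \ref{birkgaussmap}), this again yields the claim.
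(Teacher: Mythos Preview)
Your proof is correct and follows essentially the same route as the paper: combine the formula (\ref{normalcurv}) for $k_{M,p}(X)$ with the expression (\ref{exph}) for $h(X,X)$ to obtain $k_{M,p}(X) = -h(X,X)\langle\eta,\xi\rangle/\langle du^{-1}_{\eta(p)}X,X\rangle$, and conclude that $k_{M,p}(X)=0$ if and only if $h(X,X)=0$. Your additional care in checking that the denominator and $\langle\eta,\xi\rangle$ are nonzero is appropriate, though note that strictly speaking admissibility only guarantees that $\langle du^{-1}_{\eta(p)}X,X\rangle$ is \emph{definite} (its sign depends on orientation conventions); this is of course enough for the argument.
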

\begin{proof} From (\ref{exph}) and (\ref{normalcurv}) we have the equality
\begin{align*} k_{M,p}(X) = -\frac{h(X,X)\langle \eta,\xi \rangle}{\langle du^{-1}_{\eta(p)}X,X\rangle}.
\end{align*}
Therefore, $k_{M,p}(X) = 0$ if and only if $h(X,X) = 0$.

\end{proof}

Again, here the general Minkowski case presents a similar behavior as the Euclidean one. Also, the umbilic points can be characterized as the points where the normal curvature is the same for every direction. We will prove this now.

\begin{prop}\label{normumb} A point $p \in M$ is umbilic if and only if $k_{M,p}$ is constant in $T_pM\setminus\{0\}$. In this case, $k_{M,p}$ equals the principal curvature of $M$ at $p$.
\end{prop}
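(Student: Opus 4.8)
The plan is to prove each implication in turn; both come down to the formula (\ref{normalcurv}) for the Minkowski normal curvature together with elementary bilinear algebra. For the ``only if'' part I would simply substitute the defining property $d\eta_p = \lambda\,\mathrm{Id}|_{T_pM}$ of an umbilic point (with $\lambda$ the common principal curvature) into (\ref{normalcurv}), obtaining
\begin{align*}
k_{M,p}(X) = \frac{\langle du^{-1}_{\eta(p)}X,\lambda X\rangle}{\langle du^{-1}_{\eta(p)}X,X\rangle} = \lambda
\end{align*}
for every nonzero $X$; so $k_{M,p}$ is constant and equals the principal curvature. (This is Corollary \ref{princnorm} applied simultaneously in all directions.)

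For the converse, suppose $k_{M,p}(X)=\lambda$ for a fixed $\lambda\in\mathbb{R}$ and all nonzero $X\in T_pM$. By (\ref{normalcurv}) this is the identity
\begin{align*}
\langle du^{-1}_{\eta(p)}X,d\eta_pX\rangle = \lambda\,\langle du^{-1}_{\eta(p)}X,X\rangle \qquad\text{for all } X\in T_pM .
\end{align*}
The key observation is that both sides are the quadratic forms of \emph{symmetric} bilinear forms on $T_pM$: the bilinear form $(X,Y)\mapsto\langle du^{-1}_{\eta(p)}X,Y\rangle$ is symmetric because $du^{-1}_{\eta(p)}$ is the differential of the (Euclidean) Gauss map of $\partial B$, and the bilinear form $(X,Y)\mapsto\langle du^{-1}_{\eta(p)}X,d\eta_pY\rangle$ is symmetric by the expression (\ref{exph}) for $h$ together with the symmetry of $h$. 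Hence, by polarization, the two bilinear forms coincide, that is,
\begin{align*}
\langle du^{-1}_{\eta(p)}X,\,d\eta_pY-\lambda Y\rangle = 0 \qquad\text{for all } X,Y\in T_pM .
\end{align*}
Since the norm is admissible, $du^{-1}_{\eta(p)}$ is an isomorphism of $T_pM$, so $du^{-1}_{\eta(p)}X$ runs over all of $T_pM$ as $X$ does; therefore $d\eta_pY=\lambda Y$ for every $Y$, i.e. $d\eta_p=\lambda\,\mathrm{Id}|_{T_pM}$, which is exactly the umbilicity of $p$. That the constant value is the principal curvature then follows from the first part (or from Corollary \ref{princnorm}). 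As an alternative to the polarization step, one could instead rerun verbatim, now at the single point $p$, the eigenvector computation from the proof of Corollary \ref{normalumb} --- choosing Euclidean-orthonormal eigenvectors of $du^{-1}_{\eta(p)}$ and extracting the two linear relations that kill the off-diagonal entries of $d\eta_p$ --- omitting only its final appeal to Proposition \ref{allumbilic}.

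I do not anticipate a real obstacle. The one point that genuinely requires care is the symmetry of $(X,Y)\mapsto\langle du^{-1}_{\eta(p)}X,d\eta_pY\rangle$, but this is immediate from (\ref{exph}) and is nothing more than the statement ``$d\eta_p$ is self-adjoint with respect to $h$'' already recorded in the text; the remainder is polarization of quadratic forms plus the invertibility of $du^{-1}_{\eta(p)}$ guaranteed by admissibility.
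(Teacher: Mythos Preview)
Your proof is correct. The ``only if'' direction is identical to the paper's. For the converse, the paper simply invokes ``the same argument as used in Corollary~\ref{normalumb}'', i.e.\ exactly the eigenvector computation you list as your alternative: pick Euclidean-orthonormal eigenvectors $E_1,E_2$ of $du^{-1}_{\eta(p)}$, write $d\eta_p$ in that basis, and use the hypothesis together with self-adjointness of $d\xi_p$ to kill the off-diagonal entries. Your primary route via polarization is a genuinely different and more streamlined argument: once one has noted (from~(\ref{exph}) and the symmetry of $h$) that $(X,Y)\mapsto\langle du^{-1}_{\eta(p)}X,d\eta_pY\rangle$ is symmetric, equality of the quadratic forms forces equality of the bilinear forms and the conclusion follows immediately from admissibility. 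This avoids the two separate linear relations and the basis bookkeeping of the paper's argument; conversely, the paper's computation has the minor advantage of not explicitly appealing to the symmetry of that second bilinear form (it is hidden in the self-adjointness of $d\xi_p$).
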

\begin{proof} Assume first that $p \in M$ is umbilic, and let $\lambda$ be the value of the principal curvature of $M$ at $p$. If $X \in T_pM$, then
\begin{align*} k_{M,p}(X) = \frac{\langle du^{-1}_{\eta(p)}X,\lambda X\rangle}{\langle du^{-1}_{\eta(p)}X,X\rangle} = \lambda.
\end{align*}
Now suppose that $k_{M,p} = \lambda$ is constant. Then we have
\begin{align*} \langle du^{-1}_{\eta(p)}X,d\eta_pX\rangle = \lambda\langle du^{-1}_{\eta(p)}X,X\rangle
\end{align*}
for every $X \in T_pM$. From the same argument as used in Corollary \ref{normalumb} it follows that $d\eta_p = \lambda\mathrm{Id}|_{T_{p}M}$. Hence $p$ is an umbilic point with principal curvature $\lambda$.

\end{proof}

In the Euclidean subcase, the principal curvatures of $M$ at a point $p$ are precisely the maximum and the minimum values of the normal curvature in this point. It might be a little surprising that this is also true in the general Minkowski case. Namely, we have

\begin{teo} Let $\lambda_1\geq\lambda_2$ be the principal curvatures of $M$ at $p$. Then we have the inequalities $\lambda_2\leq k_{M,p}(V)\leq \lambda_1$, for every $V \in T_pM\setminus\{0\}$. Moreover, equality holds only in the respective principal directions.
\end{teo}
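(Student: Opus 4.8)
The plan is to evaluate the formula (\ref{normalcurv}) in the basis of principal directions, where the right-hand side collapses into a convex combination of $\lambda_1$ and $\lambda_2$. I would set $A := du^{-1}_{\eta(p)}$ and recall that $A$ is self-adjoint for the Euclidean inner product and, by admissibility of the norm, positive definite (so the denominator of (\ref{normalcurv}) is strictly positive and $k_{M,p}$ is well defined). By Propositions \ref{rank2}, \ref{rank2ind} and \ref{rank01} the operator $d\eta_p$ is diagonalizable, so I fix an eigenbasis $E_1, E_2 \in T_pM$ with $d\eta_p E_i = \lambda_i E_i$. First I would dispose of the umbilic case: if $\lambda_1 = \lambda_2$, then (a diagonalizable $2\times 2$ operator with a repeated eigenvalue being scalar) $d\eta_p = \lambda_1\,\mathrm{Id}$, every direction is principal, and Proposition \ref{normumb} gives $k_{M,p}(V) = \lambda_1 = \lambda_2$ for all $V$, so the statement holds trivially. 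Hence from now on $\lambda_1 > \lambda_2$.

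The key step is to show that the two principal directions are $A$-orthogonal, i.e.\ $\langle AE_1, E_2\rangle = 0$. This is precisely what Lemma \ref{princconj} says when read through the expression (\ref{exph}) for $h$; alternatively, it drops out directly from the Euclidean self-adjointness of $d\xi_p = A\circ d\eta_p$, since $\langle d\xi_p E_1, E_2\rangle = \langle E_1, d\xi_p E_2\rangle$ rewrites as $\lambda_1\langle AE_1, E_2\rangle = \lambda_2\langle AE_1, E_2\rangle$ and $\lambda_1 \neq \lambda_2$.

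With this in hand, I would write an arbitrary $V = aE_1 + bE_2 \neq 0$ and set $c_i := \langle AE_i, E_i\rangle$, which are positive since $A$ is positive definite. Using the $A$-orthogonality of $E_1, E_2$ together with the self-adjointness of $A$, the denominator of (\ref{normalcurv}) becomes $\langle AV, V\rangle = a^2 c_1 + b^2 c_2$ and the numerator becomes $\langle AV, d\eta_p V\rangle = \lambda_1 a^2 c_1 + \lambda_2 b^2 c_2$, so that
\begin{align*}
k_{M,p}(V) = \frac{\lambda_1 a^2 c_1 + \lambda_2 b^2 c_2}{a^2 c_1 + b^2 c_2}.
\end{align*}
Since $c_1, c_2 > 0$, this is a convex combination of $\lambda_1$ and $\lambda_2$, whence $\lambda_2 \le k_{M,p}(V) \le \lambda_1$; moreover $k_{M,p}(V) = \lambda_1$ forces $b = 0$ and $k_{M,p}(V) = \lambda_2$ forces $a = 0$, i.e.\ equality holds only in the respective principal directions.

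The only step I expect to require any care is the $A$-orthogonality of the principal directions; everything after it is routine bookkeeping in the eigenbasis. I would also make sure that the dichotomy ``$p$ umbilic'' versus ``$\lambda_1 > \lambda_2$'' is genuinely exhaustive, which relies on the fact that a diagonalizable linear operator on a two-dimensional space whose eigenvalues coincide is a scalar multiple of the identity.
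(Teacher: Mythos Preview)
Your argument is correct and follows essentially the same route as the paper: reduce to the non-umbilic case, establish that the principal directions are orthogonal with respect to the form $\langle A\,\cdot\,,\cdot\rangle$, and then read off the convex-combination formula. Your derivation of the $A$-orthogonality via the self-adjointness of $d\xi_p = A\circ d\eta_p$ is in fact a small improvement over the paper's version, which goes through Lemma~\ref{princconj} and (\ref{exph}) and then has to treat separately the case where one eigenvalue vanishes; your argument handles that case uniformly.
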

\begin{proof} First notice that if both principal curvatures are equal, then the result comes straightforwardly (Proposition \ref{normumb}). Let $V_1,V_2 \in T_pM$ be the principal directions associated to $\lambda_1$ and $\lambda_2$, respectively, and asssume that the principal curvatures are distinct. Recall that $V_1$ and $V_2$ are conjugate directions, which means that $h(V_1,V_2) = 0$ (Lemma \ref{princconj}). From (\ref{exph}) it follows that
\begin{align*} \langle du^{-1}_{\eta(p)}V_1,d\eta_pV_2\rangle = \langle du^{-1}_{\eta(p)}V_2,d\eta_pV_1\rangle = 0.
\end{align*}
Hence we have
\begin{align*} \langle du^{-1}_{\eta(p)}V_1,V_2\rangle = \frac{1}{\lambda_2}\langle du^{-1}_{\eta(p)}V_1,d\eta_pV_2\rangle = 0,
\end{align*}
and the same argument holds for $\langle du^{-1}_{\eta(p)}V_2,V_1\rangle$. If $V_2$ is, say, associated to the eigenvalue zero, then
\begin{align*} \langle du^{-1}_{\eta(p)}V_1,V_2\rangle = \langle V_1,du^{-1}_{\eta(p)}V_2\rangle = \frac{1}{\lambda_1}\langle d\eta_pV_1,du^{-1}_{\eta(p)}V_2\rangle = 0.
\end{align*}

\noindent Summarizing this, for $\lambda_1\neq\lambda_2$ we necessarily have
\begin{align*}\langle du^{-1}_{\eta(p)}V_1,V_2\rangle = \langle du^{-1}_{\eta(p)}V_2,V_1\rangle = 0.
\end{align*}
Therefore, decomposing a non-zero vector $V \in T_pM$ by $V = \alpha V_1 + \beta V_2$ yields the equality
\begin{align*} k_{M,p}(V) = \frac{\langle du^{-1}_{\eta(p)}V,\alpha\lambda_1V_1 + \beta\lambda_2V_2\rangle}{\langle du^{-1}_{\eta(p)}V,V\rangle} = \frac{\alpha^2\lambda_1\langle du^{-1}_{\eta(p)}V_1,V_1\rangle +  \beta^2\lambda_2\langle du^{-1}_{\eta(p)}V_2,V_2\rangle}{\alpha^2\langle du^{-1}_{\eta(p)}V_1,V_1\rangle + \beta^2\langle du^{-1}_{\eta(p)}V_2,V_2\rangle}.
\end{align*}
Using again the fact that $u^{-1}$ is the (Euclidean) Gauss map of $\partial B$, we may assume that the bilinear form associated to $du^{-1}_{\eta(p)}$ is positive definite. Hence the equality above yields immediately $\lambda_2 \leq k_{M,p}(V) \leq \lambda_1$. The claim on the equality cases is also immediate. \\

\end{proof}

\begin{remark} In the Euclidean case, at any point the sum of the normal curvatures in orthogonal of complementary directions is constant (cf. \cite{manfredo}). We find something similar in \cite{diffgeom2}, by endowing the surface with a certain Riemannian metric whose orthogonality relation ``organizes" the directions in the same way.  
\end{remark}

\begin{coro} Let $f:M\rightarrow(\mathbb{R}^3,||\cdot||)$ be an immersed surface, and let $G:(\mathbb{R}^3,||\cdot||)\rightarrow(\mathbb{R}^3,||\cdot||)$ be an isometry of the space. Then, up to the sign, the normal curvature of $M$ at each point is invariant under $G$. In particular, the Minkowski Gaussian curvature of $M$ is invariant under the action of $G$.
\end{coro}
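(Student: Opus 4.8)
The plan is to use that the Minkowski normal curvature was defined in a purely metric way — as the circular curvature of a plane section — so that it can depend only on the Minkowski structure and on the auxiliary orientation choices. First I would recall that, by the Mazur--Ulam theorem, an isometry $G$ of a finite-dimensional normed space is affine; writing $G=\tau\circ A$ with $\tau$ a translation and $A$ a linear isometry, and noting that $\tau$ changes nothing (it does not move tangent spaces, normal fields, or plane sections), we may assume $G=A$. A linear isometry preserves the unit ball $B$, hence the unit sphere $\partial B$, and hence Birkhoff orthogonality: if $v\dashv_B H$, then $H$ supports $B$ at $v/\|v\|$, so $A(H)$ supports $A(B)=B$ at $Av/\|Av\|$, i.e. $Av\dashv_B A(H)$. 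Consequently $A$ carries the Birkhoff normal field $\eta$ of $M$ onto (a global choice of sign times) the Birkhoff normal field of $A(M)$.

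Next I would reduce the claim to the two-dimensional situation. Fix $p\in M$ and a nonzero $X\in T_pM$, and set $H=\mathrm{span}\{\eta(p),X\}$. Then $A$ maps the plane $p\oplus H$ onto $(Ap)\oplus A(H)$ and the section curve $\gamma=(p\oplus H)\cap M$ onto the corresponding section curve of $A(M)$; moreover $A$ restricts to a linear isometry of $(H,\|\cdot\|)$ onto $(A(H),\|\cdot\|)$ taking the unit circle $\partial B\cap H$ onto $\partial B\cap A(H)$. Note that the explicit formula \eqref{normalcurv} is of no help here, since $A$ need not be an isometry of the auxiliary Euclidean structure; one must argue intrinsically. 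But the invariance of circular curvature, up to sign, under a linear isometry of normed planes is immediate from its very definition: arc length is preserved, the Birkhoff normal of the curve is sent to the Birkhoff normal of the image curve up to orientation, and the arc-length parametrization $\varphi$ of the unit circle is sent to an arc-length parametrization of the image unit circle, again up to orientation; hence the function $t(s)$ defined by $\eta(s)=\varphi(t(s))$ is transformed into $\pm t(s)+\mathrm{const}$, and $k(s)=t'(s)$ into $\pm k(s)$. This yields $k_{A(M),Ap}(AX)=\pm\,k_{M,p}(X)$, where the sign is dictated only by whether $A$ respects the chosen orientations of the normal fields and of the planes, hence is the same for every direction $X$.

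Finally, the statement on the Minkowski Gaussian curvature follows from the preceding theorem: at each point the principal curvatures $\lambda_1\ge\lambda_2$ are precisely the maximum and the minimum of $X\mapsto k_{M,p}(X)$ over $T_pM\setminus\{0\}$. Since $A$ multiplies all of these values by a common sign $\varepsilon\in\{+1,-1\}$, the principal curvatures of $A(M)$ at $Ap$ are $\varepsilon\lambda_1,\varepsilon\lambda_2$ (in the appropriate order), so $K(Ap)=(\varepsilon\lambda_1)(\varepsilon\lambda_2)=\lambda_1\lambda_2=K(p)$, while the mean curvature is invariant up to the sign $\varepsilon$. The one point demanding care is exactly the assertion that this sign is independent of the direction $X$ at a given point — equivalently, that it is governed by the global choice of orientation of the Birkhoff normal field rather than by $X$ — since it is this fact that makes the two sign changes cancel in the product $\lambda_1\lambda_2$; alternatively, one can sidestep it by working directly with the Birkhoff--Gauss map, observing that $\eta_{A(M)}=\varepsilon\,A\circ\eta_M\circ A^{-1}$ forces $d(\eta_{A(M)})_{Ap}=\varepsilon\,A\circ d\eta_p\circ A^{-1}$, whose eigenvalues are $\varepsilon\lambda_1,\varepsilon\lambda_2$, and invoking Corollary \ref{princnorm} to transfer the conclusion back to the normal curvature.
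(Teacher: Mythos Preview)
Your argument is correct and follows essentially the same route as the paper: reduce via Mazur--Ulam to a linear isometry, use that Birkhoff orthogonality (hence the Birkhoff normal field and the normal-section plane $H$) is preserved, and then invoke the invariance of circular curvature under plane isometries up to sign. The paper simply cites \cite[Theorem 7.2]{Ba-Ma-Sho} for this last step, whereas you reprove it from the arc-length definition; and for the Gaussian-curvature conclusion the paper just remarks that the sign is governed by whether $G|_H$ preserves orientation, while you are more explicit, using the extremality of the principal curvatures (or, alternatively, the conjugation $d(\eta_{A(M)})_{Ap}=\varepsilon\,A\circ d\eta_p\circ A^{-1}$) to see that the same $\varepsilon$ multiplies both eigenvalues. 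Your added care on this point is warranted, since the plane $H$ depends on $X$ and the paper's phrasing leaves the direction-independence of the sign implicit; your Gauss-map alternative is in fact the cleanest way to close that gap.
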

\begin{proof} Let $p \in M$ and $X \in T_pM$. We have to prove that $k_{M,p}(X) = k_{G(M),G(p)}(G(X))$. Let $H$ be the plane spanned by $X$ and $\eta(p)$, and calculate the normal curvature $k_{M,p}(X)$ at $p$ as the circular curvature of the curve $\gamma$ determined by the section of $M$ through $H$.\\

 Since Birkhoff orthogonality is invariant under isometry, we have that $\eta(G(p)) = G(\eta(p))$, and hence the plane spanned by $G(X)$ and $\eta(G(p))$ is precisely $G(H)$, which is isometric to $H$ by $G$. Also, every isometry of a normed space is a composition of a linear map with a translation (Mazur-Ulam theorem, cf. \cite{thompson}). Therefore, the normal curvature $k_{G(M),G(p)}(G(X))$ is the circular curvature of the curve $G(\gamma)$ at $G(p)$. The desired follows now from the fact that the circular curvature is, up to the sign, invariant under isometries of the plane (see \cite[Theorem 7.2]{Ba-Ma-Sho}). Whether or not the signs at $p$ change depends on the restriction of $G$ to $H$, being or not being orientation preserving. In both cases the Minkowski Gaussian curvature remains the same.

\end{proof}

\begin{remark} The corollary above states that the Minkowski Gaussian curvature is invariant under an isometry of the ambient space. However, a natural extension of the concept of isometry for surfaces immersed in a Minkowski space would be as follows: let $M$, $N$ be two surfaces immersed in $(\mathbb{R}^3,||\cdot||)$. A smooth mapping $F:M\rightarrow N$ is said to be an \emph{isometry}  (in the induced norm) if $||dF_pV|| = ||V||$ for any $p \in M$ and $V \in T_pM$. It seems to be difficult to decide whether the Minkowski Gaussian curvature is invariant under isometries. 
\end{remark}

\section{Some rigidity theorems} \label{rigid}

In this section we want to prove that, with a certain additional hypothesis, any compact, connected surface immersed in a space endowed with an admissible norm which has constant positive Minkowski Gaussian curvature, or constant mean curvature, is a Minkowski circle. Our proof follows the steps of the proof given in \cite{manfredo} for the Euclidean subcase. We will start with two auxiliary lemmas.

\begin{lemma}\label{point} Let $f:M\rightarrow (\mathbb{R}^3,||\cdot||)$ be an immersion of a compact, connected surface without boundary. Then there exists a point $p \in M$ such that the product of the principal curvatures of $M$ at $p$ is positive.
\end{lemma}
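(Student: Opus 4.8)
The plan is to mimic the classical Euclidean argument: use compactness to find an extremal point and then show that at such a point the surface curves ``away'' from its tangent plane in all directions, forcing the affine fundamental form (equivalently, the Euclidean second fundamental form) to be definite there; then invoke Proposition \ref{gaussposit} to conclude $K(p) > 0$. Concretely, first I would fix the auxiliary inner product $\langle\cdot,\cdot\rangle$ on $\mathbb{R}^3$ and consider the smooth function $g:M\rightarrow\mathbb{R}$ given by $g(x) = \langle f(x),f(x)\rangle$ (the squared Euclidean distance from the origin, after translating so that the origin lies in the region ``inside'' $f(M)$; since $M$ is compact this is harmless). By compactness $g$ attains a maximum at some point $p \in M$, and $f(p)$ has maximal Euclidean norm $r$ among points of $M$; in particular $f(M)$ lies inside the Euclidean ball of radius $r$ centered at the origin, and is tangent to its boundary sphere at $f(p)$.

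The key step is the standard second-derivative computation. For any $X \in T_pM$ and any curve $\gamma$ in $M$ with $\gamma(0)=p$, $\gamma'(0)=X$, differentiating $g\circ\gamma$ twice at $0$ and using $d g_p = 0$ gives $\langle X,X\rangle + \langle f(p), D_XX\rangle \le 0$, where $D$ is the flat ambient connection. Writing $D_XX = \nabla_XX + \overline h(X,X)\xi$ with $\xi$ the Euclidean Gauss map, and noting that at the maximum $f(p)$ is Euclidean-normal to $M$ (so $f(p) = \pm r\,\xi(p)$ and $\langle f(p),\nabla_XX\rangle = 0$), this reduces to $\|X\|_e^2 \pm r\,\overline h(X,X) \le 0$ for all $X$, i.e. the Euclidean second fundamental form $\overline h$ is (positive or negative) definite at $p$ — indeed $|\overline h(X,X)| \ge \|X\|_e^2/r > 0$ for $X \ne 0$. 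Hence the Euclidean Gaussian curvature $K_e(p) > 0$, and by Proposition \ref{gaussposit} the Minkowski Gaussian curvature $K(p) = \lambda_1\lambda_2 > 0$, which is exactly the assertion.

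The main obstacle — really the only subtle point — is being careful that the auxiliary Euclidean structure legitimately does the job: one must make sure that ``origin inside'' can be arranged (translate so that $0$ lies in the bounded component cut off by the compact surface, or simply translate so that $0$ is any interior point of the convex hull of $f(M)$; compactness guarantees this is possible and $g$ still attains a positive maximum), and that the sign ambiguity $f(p) = \pm r\,\xi(p)$ does not matter, since definiteness of $\overline h$ is all we need and Proposition \ref{gaussposit} handles the Minkowski side regardless of orientation. A minor point to check is that $T_pM$ is genuinely tangent to the sphere $\partial(rB_e)$ at $f(p)$, which follows from $dg_p = 0$: for $X \in T_pM$, $0 = dg_p(X) = 2\langle f(p),X\rangle$, so $f(p) \perp_e T_pM$. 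Everything else is the routine second-variation estimate above.
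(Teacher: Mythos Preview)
Your argument is correct. The second-variation computation at the Euclidean farthest point is standard and yields definiteness of the Euclidean second fundamental form, so $K_e(p)>0$; Proposition \ref{gaussposit} then transfers this to $K(p)>0$. The only cosmetic remark is that no translation is really needed: $g$ attains a maximum on the compact $M$ regardless of where the origin sits, and that maximum is automatically positive (a two-dimensional immersed surface cannot collapse to the origin), so $f(p)\neq 0$ and the rest goes through.

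Your route, however, is genuinely different from the paper's primary proof. The paper argues intrinsically in the Minkowski geometry: it encloses $M$ in the smallest Minkowski ball $B_M$ centred at the origin, picks a contact point $p\in M\cap\partial B_M$, and compares \emph{Minkowski normal curvatures} of $M$ and of $\partial B_M$ in each direction at $p$ via a planar convexity result (Theorem 8.3 of \cite{Ba-Ma-Sho}) to bound the product of the principal curvatures below by $1/r^2$. Your approach instead leans entirely on the auxiliary Euclidean structure and delegates the passage to the Minkowski side to Proposition \ref{gaussposit}. This is shorter and avoids the plane-curve comparison lemma, at the cost of being less ``Minkowski-native''. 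In fact the paper itself anticipates exactly your strategy in the Remark immediately following the lemma, calling it an easier but less geometric alternative.
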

\begin{proof} Let $B_M$ be the smallest (closed) Minkowski ball centered at the origin and containing $M$. Then it is clear that there exists a point $p \in B_M\cap M$, and that $T_pM = T_pB_M$. Taking any normal section, and regarding the normal curvatures of $B_M$ and $M$ the circular curvatures of the respective intersection curves, it becomes clear that the normal curvature of $B_M$ is greater than or equal to the normal curvature of $M$ (indeed, the opposite would make it possible to construct closed, convex curves contradicting \cite[Theorem 8.3]{Ba-Ma-Sho}). Since the principal curvatures of $M$ at $p$ are the normal curvatures in the associated principal directions, it follows that their product is greater than or equal to $1/r^2$, where $r$ is the radius of $B_M$.

\end{proof}

\begin{remark} An easier (but less geometric) proof of this lemma could be based on the usage of the same result for the Euclidean case (as it is proved in \cite{manfredo}) and Proposition \ref{gaussposit}.\\
\end{remark}

Before coming to the next auxiliary lemma, let us present an observation. Assume that the principal curvatures of $M$ at a point $p$ are distinct, and both non-zero. Then, due to continuity, this condition on the principal curvatures is true in an open neighborhood $U$ of $p$. Therefore we may set functions $\lambda_1:U\rightarrow \mathbb{R}$ and $\lambda_2:U\rightarrow\mathbb{R}$ to be the greatest and the smallest principal curvatures at $q \in U$, respectively, and hence we can choose distinct vector fields $V_1,V_2 \in C^{\infty}(U)$ such that, at each point $q \in U$, $V_1$ and $V_2$ are principal directions associated to $\lambda_1$ and $\lambda_2$, respectively. It follows from Lemma \ref{princconj} and the comments below it that $V_1$ and $V_2$ give (linearly independent) conjugate directions at any point of $U$. From Corollary 1 in Section 3.4 of \cite{manfredo} it follows that we may endow $U$ with a parametrization whose coordinate curves are tangent to the respective directions given by $V_1$ and $V_2$. In our language, this is the same as stating that we may re-scale these vector fields in such a way that we have $[V_1,V_2] = 0$ at each point of $U$. \\

Let $V_1$ and $V_2$ be the principal vector fields of a surface $M$. Then its trajectories are the curvature lines of $M$. We say that $M$ has \emph{coercively convex curvature lines} at $p \in M$ if the following hypothesis holds: let $p \in M$ be a point where $D_{V_1}V_2|_p = 0$, and let $\gamma$ be a local trajectory of $V_2$ through $p$. Then the projection of $D_{V_1}V_2|_{\gamma}$ onto $V_1$ has negative derivative at $p$. Intuitively, this means that the curvature line associated to $V_2$ ``forces $D_{V_1}V_2$ to the other side of the curve when one passes through $p$" (see Figure \ref{coercive}).

\begin{figure}[h]
\centering
\includegraphics{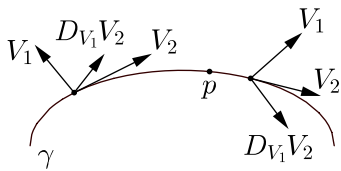}
\caption{Coercive convexity of the curvature lines of $M$ at $p$.}
\label{coercive}
\end{figure}

\begin{lemma}\label{minmax} Let $f:M\rightarrow(\mathbb{R}^3,||\cdot||)$ be an immersed surface, and let $V_1,V_2$ be the vector fields given by the principal directions associated to the principal directions $\lambda_1 \geq \lambda_2$. Let $p \in M$, where the Minkowski Gaussian curvature of $M$ is positive, and assume that $\lambda_1$ has a local maximum at $p$ and that $\lambda_2$ has a local minimum at $p$. Suppose also that $M$ has coercively convex curvature lines at $p$. Then $p$ is an umbilic point.
\end{lemma}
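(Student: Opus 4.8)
The strategy is to mimic the classical proof of Hilbert's lemma (as presented in do Carmo's book, \cite{manfredo}), adapting it to the equiaffine setting induced by the Birkhoff normal field. First I would assume, for contradiction, that $p$ is not umbilic, so $\lambda_1(p) > \lambda_2(p)$; by continuity this strict inequality persists in a neighborhood $U$ of $p$, and since the Minkowski Gaussian curvature is positive at $p$ we may also assume (shrinking $U$) that both $\lambda_1$ and $\lambda_2$ are positive on $U$. Then, using the observation preceding the lemma (based on Corollary~1 in Section~3.4 of \cite{manfredo}), I rescale the principal vector fields $V_1,V_2$ so that $[V_1,V_2]=0$ on $U$, which gives a coordinate system $(v_1,v_2)$ with $\partial_{v_i}$ proportional to $V_i$. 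In such coordinates the Codazzi-type compatibility conditions for the equiaffine immersion relate the derivatives $V_2(\lambda_1)$ and $V_1(\lambda_2)$ to the connection coefficients; concretely, expanding $0 = D_{V_1}D_{V_2}\eta - D_{V_2}D_{V_1}\eta - D_{[V_1,V_2]}\eta$ exactly as in the proof of Lemma~\ref{princconj}, and now tracking the \emph{tangential} components (not just the transversal one), yields expressions of the form $V_2(\lambda_1) = (\lambda_1-\lambda_2)\,a$ and $V_1(\lambda_2) = (\lambda_1-\lambda_2)\,b$, where $a,b$ are the components of $\nabla_{V_2}V_1$ and $\nabla_{V_1}V_2$ along $V_1$ and $V_2$ respectively (equivalently, in terms of $D_{V_i}V_j$ up to transversal parts).

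Next I use the extremality hypotheses at $p$. Since $\lambda_1$ has a local maximum at $p$ we get $V_2(\lambda_1)|_p = 0$ and $V_2V_2(\lambda_1)|_p \leq 0$; since $\lambda_2$ has a local minimum we get $V_1(\lambda_2)|_p = 0$ and $V_1V_1(\lambda_2)|_p \geq 0$. The vanishing of the first derivatives forces, at $p$, the relevant connection component to vanish — in particular $D_{V_1}V_2|_p$ is such that its $V_1$-component (equivalently the quantity appearing in the coercive-convexity hypothesis) vanishes, which is precisely why the hypothesis ``$M$ has coercively convex curvature lines at $p$'' is invoked: it controls the sign of the \emph{derivative} of this quantity along the $V_2$-trajectory through $p$. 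I would then differentiate the Codazzi relations a second time along $V_2$ (for the $\lambda_1$ equation) and along $V_1$ (for the $\lambda_2$ equation) and evaluate at $p$. Using $V_2(\lambda_1)|_p = V_1(\lambda_2)|_p = 0$ and $[V_1,V_2]=0$, the second derivatives simplify to something like $V_2V_2(\lambda_1)|_p = (\lambda_1-\lambda_2)\,V_2(a)|_p$ and $V_1V_1(\lambda_2)|_p = (\lambda_1-\lambda_2)\,V_1(b)|_p$, plus lower-order terms that vanish at $p$ because the first derivatives do.

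Finally, I combine the sign information: $(\lambda_1-\lambda_2)|_p > 0$, the left-hand sides have definite signs ($\leq 0$ and $\geq 0$ respectively) from extremality, and the coercive convexity hypothesis pins down the sign of $V_2(a)|_p$ (the derivative of the projection of $D_{V_1}V_2$ onto $V_1$ being negative at $p$). Pairing this with the analogous contribution and the Gauss equation relating $\lambda_1,\lambda_2$ to the (positive) Gaussian curvature via a Theorema-Egregium-type identity — i.e. expressing the curvature of $\nabla$ in terms of the $\lambda_i$ and their derivatives — one obtains a contradiction, for instance a quantity that is simultaneously $\geq 0$ and $< 0$. Hence $\lambda_1(p) = \lambda_2(p)$ and $p$ is umbilic. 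The main obstacle I anticipate is bookkeeping: unlike the Euclidean case, the induced connection $\nabla$ is not metric, so there is no Riemannian curvature identity to lean on directly; one must extract the needed scalar relation purely from the integrability condition $R^D = 0$ of the flat ambient connection together with the Ricci/Codazzi equations for the equiaffine immersion recorded earlier in the paper, and then verify that the coercive-convexity hypothesis is exactly the extra input that makes the sign analysis close — this is where the argument genuinely departs from the textbook proof and where care is required.
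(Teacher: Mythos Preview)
Your plan is correct and follows the same overall strategy as the paper: assume $p$ is not umbilic, choose commuting principal fields $V_1,V_2$ via the coordinate trick, extract a Codazzi-type identity from $D_{V_1}D_{V_2}\eta=D_{V_2}D_{V_1}\eta$, differentiate once more, and close the argument with the sign supplied by coercive convexity. The execution differs slightly. You read off the \emph{tangential} components of the Codazzi identity to get scalar relations of the form $V_2(\lambda_1)=(\lambda_2-\lambda_1)f$ and $V_1(\lambda_2)=(\lambda_1-\lambda_2)g$, where $D_{V_1}V_2=fV_1+gV_2$ (this is tangential by Lemma~\ref{princconj}), and then differentiate those scalars. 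The paper instead differentiates the full vector identity twice and projects onto the $\eta$-direction via $h$, eventually combining both second derivatives into the single equation $V_2V_2(\lambda_1)\,h(V_1,V_1)-V_1V_1(\lambda_2)\,h(V_2,V_2)=(\lambda_2-\lambda_1)C$ with $C=V_2(f)\,h(V_1,V_1)$, and uses the definiteness of $h$ (from $K(p)>0$) to read off the signs. Your route is actually the shorter of the two: differentiating $V_2(\lambda_1)=(\lambda_2-\lambda_1)f$ along $V_2$ and evaluating at $p$ (where $f(p)=0$) gives $V_2V_2(\lambda_1)|_p=(\lambda_2-\lambda_1)|_p\,V_2(f)|_p$, which is strictly positive by coercive convexity and $\lambda_1>\lambda_2$, contradicting $V_2V_2(\lambda_1)|_p\le 0$ immediately.

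One correction: drop the appeal to a ``Theorema-Egregium-type identity'' and the curvature of $\nabla$. That is a false lead. No Gauss-equation input beyond the Codazzi relation is needed; the contradiction is already present in the scalar identity above once you invoke coercive convexity, and the hypothesis $K(p)>0$ plays only the auxiliary role (in the paper's version) of making $h$ definite so that the signs of $h(V_i,V_i)$ agree. If you go looking for an intrinsic-curvature identity you will be chasing something that is neither available here (the induced connection $\nabla$ is not metric) nor required.
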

\begin{proof} Suppose that $p$ is not umbilic, take a neighborhood $U$ of $p$ where $\lambda_1 > \lambda_2$, and let $V_1, V_2 \in C^{\infty}(U)$ be vector fields as in the observation above. Since $[V_1,V_2]$ vanishes identically and $D$ is a flat connection, we have that $D_{V_2}D_{V_1}\eta = D_{V_1}D_{V_2}\eta$. This equality reads
\begin{align*} V_2(\lambda_1)V_1 + \lambda_1D_{V_2}V_1 = V_1(\lambda_2)V_2 + \lambda_2D_{V_1}V_2.
\end{align*}
Differentiating this equality in the directions of $V_1$ and $V_2$, respectively, we get the equalities
\begin{align*} V_1(V_2(\lambda_1))V_1 + V_2(\lambda_1)D_{V_1}V_1 + V_1(\lambda_1)D_{V_2}V_1 + \lambda_1D_{V_1}D_{V_2}V_1 = \\ =V_1(V_1(\lambda_2))V_2 + V_1(\lambda_2)D_{V_1}V_2 + V_1(\lambda_2)D_{V_1}V_2 + \lambda_2D_{V_1}D_{V_1}V_2,
\end{align*}
and
\begin{align*} V_2(V_2(\lambda_1))V_1 + V_2(\lambda_1)D_{V_2}V_1 + V_2(\lambda_1)D_{V_2}V_1 + \lambda_1D_{V_2}D_{V_2}V_1 = \\ = V_2(V_1(\lambda_2))V_2 + V_1(\lambda_2)D_{V_2}V_2 + V_2(\lambda_2)D_{V_1}V_2 + \lambda_2D_{V_2}D_{V_1}V_2.
\end{align*}

At $p$ we have $V_1(\lambda_1) = V_2(\lambda_1) = V_1(\lambda_2) = V_2(\lambda_2) = 0$, since $p$ is a local extremum of both $\lambda_1$ and $\lambda_2$. Since we have vectorial equalities, in each of them the projections of both sides onto $\eta$ must be equal. At $p$, it is clear that the only terms which have non-zero projection onto $\eta$ are the terms of the form $D_{V_i}D_{V_j}V_k$. Therefore, by the Gauss equation we have the following: looking at the restriction of the first equality to the direction $\eta$ at $p$ yields
\begin{align*} \lambda_1h(D_{V_2}V_1,V_1) = \lambda_2h(D_{V_1}V_2,V_1).
\end{align*}
Since $\lambda_1 \neq \lambda_2$ and $D_{V_1}V_2 = D_{V_2}V_1$, it follows that $h(D_{V_2}V_1,V_1) = h(D_{V_1}V_2,V_1) = 0$. Hence, at $p$, the vectors $D_{V_1}D_{V_2}V_1$ and $D_{V_1}D_{V_1}V_2$ are tangential. Repeating the argument for the second equality, we see that the same holds for $D_{V_2}D_{V_2}V_1$ and $D_{V_2}D_{V_1}V_2$. \\

Recall that from Lemma \ref{princconj} we have $h(V_1,V_2) = 0$. Evaluating (at $p$) the affine fundamental form with the vectors involved in the first equality and the vector $V_2$, we obtain
\begin{align}\label{main1} \lambda_1h(D_{V_1}D_{V_2}V_1,V_2) = V_1(V_1(\lambda_2))h(V_2,V_2) + \lambda_2h(D_{V_1}D_{V_1}V_2,V_1).
\end{align}
Doing the same with the second equality and the vector $V_1$, we get
\begin{align} \label{main2} V_2(V_2(\lambda_1))h(V_1,V_1) + \lambda_1h(D_{V_2}D_{V_2}V_1,V_1) = \lambda_2h(D_{V_2}D_{V_1}V_2,V_1).
\end{align}
We claim now that
\begin{align*}h(D_{V_1}D_{V_2}V_1,V_2) = h(D_{V_1}D_{V_1}V_2,V_2) = h(D_{V_2}D_{V_2}V_1,V_1) = h(D_{V_2}D_{V_1}V_2,V_1) = C,
\end{align*}
say. Indeed, using again the fact that $[V_1,V_2] = 0$ and recalling that $D$ is a flat connection, we have that $D_{V_2}(D_{V_1}D_{V_2}V_1) = D_{V_1}(D_{V_1}D_{V_1}V_2)$, and hence their projections onto $\eta$ are equal. Then the Gauss equation gives $h(D_{V_1}D_{V_2}V_1,V_2) = h(D_{V_1}D_{V_1}V_2,V_1)$. We use the same argument to derive the other two equalities. Now, summing up (\ref{main1}) and (\ref{main2}), we get
\begin{align}\label{main} V_2(V_2(\lambda_1))h(V_1,V_1) - V_1(V_1(\lambda_2))h(V_2,V_2) = (\lambda_2-\lambda_1)C.
\end{align}
Our last step is to prove that $C < 0$, up to re-orientation of $V_2$. For this sake, we decompose the vector field $D_{V_1}V_2$ in coordinates as $D_{V_1}V_2 = fV_1+gV_2$, for some functions $f,g:U\rightarrow\mathbb{R}$. From the previous equalities $h(D_{V_1}V_2,V_1) = h(D_{V_1}V_2,V_2) = 0$ at $p$, we have that $f(p) = g(p) = 0$. Derivating, we have
\begin{align*} D_{V_2}D_{V_1}V_2 = V_2(f) + fD_{V_2}V_1 + V_2(g)V_2 + gD_{V_2}V_2,
\end{align*}
and hence we have $C = h(D_{V_2}D_{V_1}V_2,V_1) = V_2(f)h(V_1,V_1)$ at $p$. From the coercive convexity of the curvature lines of $M$ at $p$ it follows that $V_2(f) < 0$. Finally, we look at (\ref{main}). Assume that, without loss of generality, $h$ is positive definite. Then the left hand side is non-positive, since $V_2(V_2(\lambda_1)) \leq 0$ and $V_1(V_1(\lambda_2)) \geq 0$. However, the right hand side is strictly positive, since $(\lambda_2 - \lambda_1) < 0$ and $C < 0$. This contradiction concludes the proof.

\end{proof}

We are ready now to state and prove the first theorem of this section.

\begin{teo} Let $f:M\rightarrow(\mathbb{R}^3,||\cdot||)$ be a compact, connected immersed surface without boundary, and assume that the norm $||\cdot||$ is admissible. Assume also that $M$ has coercively convex curvature lines at any point where the greatest principal curvature function attains a global maximum. If the Minkowski Gaussian curvature of $M$ is constant, then $M$ is a Minkowski sphere.
\end{teo}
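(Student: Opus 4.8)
The plan is to follow the classical Hilbert–Liebmann argument, as in \cite{manfredo}, adapting it to the Birkhoff-Gauss setting. Suppose $M$ has constant Minkowski Gaussian curvature $K = \lambda_1\lambda_2$. By Lemma \ref{point}, there is a point where the product of the principal curvatures is positive, so the constant $K$ is positive; hence $h$ is definite everywhere (Proposition \ref{gaussposit}), $\lambda_1 \geq \lambda_2 > 0$ everywhere (after fixing the orientation of $\eta$), and both principal curvature functions are continuous on the compact surface $M$. Therefore $\lambda_1$ attains a global maximum at some point $p_0 \in M$. Since $K = \lambda_1\lambda_2$ is constant, at that point $\lambda_2 = K/\lambda_1$ attains a global minimum. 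The goal is to show $p_0$ is umbilic; once that is done, $\lambda_1(p_0) = \lambda_2(p_0)$, and since $\lambda_1(p_0)$ is the global max of $\lambda_1$ while $\lambda_2(p_0)$ is the global min of $\lambda_2$, for every $q \in M$ we get $\lambda_1(q) \leq \lambda_1(p_0) = \lambda_2(p_0) \leq \lambda_2(q) \leq \lambda_1(q)$, forcing equality; so every point of $M$ is umbilic, and by Proposition \ref{allumbilic} (combined with compactness, which rules out the plane) $M$ is a Minkowski sphere.

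The heart of the argument is thus the umbilicity of $p_0$. First I would dispose of the trivial case: if $\lambda_1(p_0) = \lambda_2(p_0)$ there is nothing to prove. Otherwise, by hypothesis $M$ has coercively convex curvature lines at $p_0$ (this is exactly the point where $\lambda_1$ attains its global maximum), and $p_0$ lies where the Minkowski Gaussian curvature is positive; moreover $\lambda_1$ has a local (indeed global) maximum at $p_0$ and $\lambda_2 = K/\lambda_1$ has a local (global) minimum there. These are precisely the hypotheses of Lemma \ref{minmax}, which then yields that $p_0$ is umbilic — contradicting the assumption $\lambda_1(p_0) \neq \lambda_2(p_0)$. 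So $p_0$ must be umbilic, and the previous paragraph finishes the proof.

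The main obstacle I anticipate is not in the logical skeleton, which is a faithful transcription of Liebmann's theorem, but in making sure the hypotheses of Lemma \ref{minmax} are legitimately met at $p_0$: one needs $\lambda_1, \lambda_2$ to be genuinely defined and smooth near $p_0$, which requires knowing $p_0$ is not a limit of umbilic points in a bad way. If $p_0$ is non-umbilic, continuity gives a neighborhood $U$ with $\lambda_1 > \lambda_2 > 0$, so on $U$ the principal curvature functions are smooth and the principal vector fields $V_1, V_2$ can be chosen smoothly (using Lemma \ref{princconj} and the conjugacy remarks to build a coordinate parametrization with $[V_1,V_2] = 0$), and Lemma \ref{minmax} applies verbatim. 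Thus the argument is self-contained once one observes that the only case needing Lemma \ref{minmax} is the non-umbilic one, where smoothness is automatic. A secondary point to state carefully is why $M$ is a Minkowski sphere rather than a plane: $M$ is compact without boundary, so it cannot be contained in a plane, leaving only the spherical alternative in Proposition \ref{allumbilic}.
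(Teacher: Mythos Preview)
Your proposal is correct and follows essentially the same route as the paper's proof: use Lemma~\ref{point} to get $K>0$, pick $p_0$ where $\lambda_1$ is maximal (hence $\lambda_2$ minimal), invoke Lemma~\ref{minmax} to force $p_0$ umbilic, then chain the inequalities to make every point umbilic and apply Proposition~\ref{allumbilic}. The only minor addition worth keeping from the paper is the final remark that $M$, being compact connected without boundary, is both open and closed in the Minkowski sphere and hence equals it; your compactness comment handles the plane alternative but you might make this last step explicit.
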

\begin{proof} From Lemma \ref{point} it follows that the Minkowski Gaussian curvature of $M$ is positive everywhere. Let $\lambda_1,\lambda_2:M\rightarrow\mathbb{R}$ be the principal curvature functions of $M$, with $\lambda_1 \geq \lambda_2$. By compactness, there exists a point $p \in M$ such that $\lambda_1$ attains its maximum over $M$. Since the product $\lambda_1\lambda_2$ is constant, it follows that $\lambda_2$ attains a minimum at $p$. Hence, from Lemma \ref{minmax} we have that $\lambda_1(p) = \lambda_2(p)$. Therefore, for any $q \in M$ we have
\begin{align*} \lambda_2(p) \leq \lambda_2(q) \leq \lambda_1(q) \leq \lambda_1(p) = \lambda_2(p),
\end{align*}
and this shows that every point of $M$ is an umbilic point. From Proposition \ref{allumbilic} it follows that $M$ is contained in a Minkowski sphere. Since $M$ is connected, compact and without boundary, it follows that $M$ is both open and closed in this sphere, and therefore $M$ is the entire Minkowski sphere.

\end{proof}

Under the same hypothesis, we can also prove that a compact, connected immersed surface with positive Minkowski Gaussian curvature and constant mean curvature is a Minkowski sphere.

\begin{teo} Let $f:M\rightarrow (\mathbb{R}^3,||\cdot||)$ be a compact, connected surface immersed in an admissible Minkowski space. Assume also that $M$ has coercively convex curvature lines at any point where the greatest principal curvature function attains a global maximum. If the Minkowski Gaussian curvature is positive and the mean curvature is constant, then $M$ is a Minkowski sphere.
\end{teo}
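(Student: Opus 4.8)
The plan is to follow closely the pattern of the preceding theorem, replacing the product of the principal curvatures by their sum. By compactness of $M$, the continuous principal-curvature function $\lambda_1$ attains a global maximum at some point $p \in M$. Since $\lambda_1 + \lambda_2 = 2H$ is constant on $M$, at this same point $\lambda_2 = 2H - \lambda_1$ attains a global --- hence local --- minimum. This is exactly the structural input (constancy of $H$ converting ``$p$ is a maximum of $\lambda_1$'' into ``$p$ is a minimum of $\lambda_2$'') that makes the constant-Gaussian-curvature argument go through essentially verbatim here.

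Next I would verify the hypotheses of Lemma \ref{minmax} at $p$: the Minkowski Gaussian curvature is positive at $p$ by assumption; $\lambda_1$ has a local maximum and $\lambda_2$ a local minimum at $p$ by the previous step; and $p$ is a point where the greatest principal curvature attains a global maximum, so by hypothesis $M$ has coercively convex curvature lines there. Lemma \ref{minmax} then yields that $p$ is umbilic, say $\lambda_1(p) = \lambda_2(p) = \kappa$.

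Then I would propagate umbilicity to all of $M$. For an arbitrary $q \in M$, from $\lambda_1(q) + \lambda_2(q) = 2H = 2\kappa$ together with $\lambda_1(q) \leq \lambda_1(p) = \kappa$ we obtain $\lambda_2(q) = 2\kappa - \lambda_1(q) \geq \kappa$, whence
\[ \kappa \leq \lambda_2(q) \leq \lambda_1(q) \leq \kappa, \]
forcing $\lambda_1(q) = \lambda_2(q) = \kappa$. Thus every point of $M$ is umbilic. By Proposition \ref{allumbilic}, $M$ is then contained in a plane or in a Minkowski sphere; the plane is excluded because its Minkowski Gaussian curvature vanishes while $K > 0$ on $M$. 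Hence $M$ lies in a Minkowski sphere $S$, and being compact (so closed in $S$), without boundary (so open in $S$), with $S$ connected, we conclude $M = S$.

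The point deserving care --- rather than a genuine obstacle, since Lemma \ref{minmax} carries the analytic weight --- is that the eigenvalue functions $\lambda_1,\lambda_2$ are only continuous, being smooth merely away from umbilic points. This causes no trouble: if $p$ were non-umbilic, the argument of Lemma \ref{minmax} operates in a neighborhood of $p$ where $\lambda_1 > \lambda_2$ and the principal fields $V_1, V_2$ are smooth, while if $p$ is umbilic there is nothing further to prove.
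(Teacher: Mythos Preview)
Your proof is correct and follows essentially the same approach as the paper's: use compactness to locate a global maximum of $\lambda_1$, note that constancy of $H$ forces a minimum of $\lambda_2$ there, invoke Lemma \ref{minmax} to get an umbilic point, and then propagate umbilicity globally to conclude via Proposition \ref{allumbilic}. You simply spell out in full the inequality chain and the plane-versus-sphere dichotomy that the paper compresses into ``as in the proof of the last theorem''.
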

\begin{proof} Let $p \in M$ be a point where the greatest principal curvature function $\lambda_1$ attains its maximum. Since the mean curvature is constant, it follows that $M$ attains its minimum at $p$. From Lemma \ref{minmax} it follows that $p$ is an umbilic point. As in the proof of the last theorem, it follows that every point of $M$ is umbilic, and then $M$ is a Minkowski sphere.

\end{proof}

\bibliography{bibliography.bib}

\end{document}